\newcommand{\EE}{\ensuremath{\mathbb{E}}}
\newcommand{\NN}{\ensuremath{\mathbb{N}}}
\newcommand{\PP}{\ensuremath{\mathbb{P}}}
\newcommand{\RR}{\ensuremath{\mathbb{R}}}
\newcommand{\eps}{\ensuremath{\epsilon}}
\newcommand{\bB}{\ensuremath{\mathcal{B}}}
\newcommand{\dD}{\ensuremath{\mathcal{D}}}
\newcommand{\fF}{\ensuremath{\mathcal{F}}}
\newcommand{\tT}{\ensuremath{\mathcal{T}}}
   \newtheorem{lemma}{Lemma}[section]
   \newtheorem{theorem}[lemma]{Theorem}
   \newtheorem{remark}[lemma]{Remark}
   \newtheorem{example}[lemma]{Example}
   \newtheorem{definition}[lemma]{Definition}
\numberwithin{equation}{section}
\renewcommand{\phi}{\varphi}
\begin{document}

\title[Pathwise solutions and attractors for retarded SPDEs]
{Pathwise solutions and attractors for retarded SPDEs with time smooth diffusion coefficients}

\author{Hakima Bessaih}\address[Hakima Bessaih]{University of Wyoming
Laramie, WY 82071-3036, US} \email[Hakima Bessaih]{bessaih@uwyo.edu}

\author{Mar\'{\i}a J. Garrido-Atienza}\address[Mar\'{\i}a J. Garrido-Atienza]{Dpto. Ecuaciones Diferenciales y An\'alisis Num\'erico\\Universidad de Sevilla, Apdo. de Correos 1160, 41080-Sevilla, Spain} \email[Mar\'{\i}a J. Garrido-Atienza]{mgarrido@us.es}

\author{Bj{\"o}rn Schmalfu{\ss }}
\address[Bj{\"o}rn Schmalfu{\ss }]{Institut f\"{u}r Mathematik\\
Institut f{\"u}r Stochastik, Ernst Abbe Platz 2, 07737\\Jena,Germany\\ }\email[Bj{\"o}rn Schmalfu{\ss }]{bjoern.schmalfuss@uni-jena.de}

\begin{abstract}
In this paper we study the long--time dynamics  of mild solutions to retarded stochastic evolution systems driven by a Hilbert-valued Brownian motion.
As a preparation for this purpose we have to show the existence and uniqueness of a cocycle solution of such an equation.
We do not assume that the noise is given in additive form or that it is a very simple multiplicative noise. However, we need some smoothing property for the coefficient in front of the noise.
The main idea of this paper consists of expressing the stochastic integral in terms of non-stochastic integrals and the noisy path by using an integration by parts. This latter term causes that in a first moment only a local mild solution can be obtained, since in order to apply the Banach fixed point theorem it is crucial to have the H\"older norm of the noisy path to be sufficiently small. Later, by using appropriate stopping times, we shall derive the existence and uniqueness of a global mild solution. Furthermore, the asymptotic behavior is investigated by using the {\it Random Dynamical Systems theory}. In particular, we shall show that the global mild solution generates a random dynamical system that, under an appropriate smallness condition for the time lag, have associated a random attractor.
\end{abstract}

\maketitle

\section{Introduction}
The purpose of this paper is to show the existence of a random dynamical system generated by the solution of stochastic partial differential equations with delay of the following form
 \begin{equation}\label{eq}
  \left\{
  \begin{aligned}
    du&=(Au(t)+F(u_t))dt+G(u_t)dW(t),\qquad&&\text{for}\quad t\geq 0\\
    u(t)&=\xi(t),\qquad&&\text{for}\quad t\in[-\mu,0]
  \end{aligned}
  \right.
\end{equation}
in a separable Hilbert space $H$, where $A$ is the infinitesimal generator of an analytic semigroup on $H$, $F$ and $G$ are appropriate
nonlinear terms, and $W$ is a two-sided Wiener process with values in a separable Hilbert space $U$. The term $u_t$ is given by $u_t(s)=u(t+s)$ with $s\in [-\mu,0]$, where $\mu>0$ is given and the initial condition  is a continuous function on $[-\mu,0]$.\\
Retarded differential systems arise naturally in several situations in the area of applied mathematics due to biological
motivations like species growth or incubation time in delayed transmission of disease, see for instance \cite{Kuang} and \cite{Murray}, or due to
physical reasons with non--instant transmission phenomena such as high velocity fields in wind tunnel experiments, see \cite{HaleLunel}. Further examples can be found in biochemical reactions in the field of gene regulation where lengthy transcription has been modeled with delayed dynamics, see \cite{MBHT}. The asymptotic behavior of such models has meaningful interpretations like permanence, instability and chaotic developments.
From the mathematical point of view, there is a huge literature concerning the study of retarded stochastic differential systems, we refer here to the monographs by Mao \cite{Mao1,Mao2}, and to the papers \cite{MSch03}, \cite{real}, \cite{CaLiuTru}, \cite{CGR02} and \cite{GR03}, to mention a few of them.\\
In this paper we are interested in analyzing the long-time behavior of the (mild) solution to (\ref{eq}) by obtaining the random attractor associated to the random dynamical system generated by the mild solution. However, even when dealing with non-retarded equations, a fundamental problem in the study of the dynamics of a stochastic partial differential equation is to show that it generates a random dynamical system. Nevertheless, it is well-known that a large class of partial differential equations with stationary random coefficients and Ito stochastic ordinary differential equations generate random dynamical systems, see the monograph by Arnold \cite{Arn98}. However, for the stochastic partial differential equations  driven by Brownian motion the problem is much more difficult, and the reason is twofold: on the one hand, the stochastic integral is only defined almost surely where the exceptional set may depend on the initial state, which contradicts the definition of the cocycle property, and on the other, Kolmogorov's theorem in an appropriate form is only true for finite dimensional random fields, see Kunita \cite{Kun91} Theorem
1.4.1. In spite of that there are some partial results for additive as well as simple multiplicative Brownian noises, see for instance the papers \cite{FL}, \cite{DLSch, DLSch1} and \cite{CKSch}, to mention only a few of them. In the case of having retarded stochastic systems there are also positive results, as shown in the papers \cite{CGSV10}, \cite{CGSch07} and \cite{CRCh}. In all the mentioned articles the main ingredient consists of transforming the stochastic equation into a random one, being possible to deal with the latter by using deterministic techniques.  This transformation is known as {\it cohomology}, which consists of a stationary coordinate change by means of which flows of stochastic differential equations may be viewed as ordinary differential equations with a random parameter. This method presents the restriction that it is not always possible to find this appropriate change of variable, since it is applicable only when considering an additive noise or very particular cases of multiplicative noise.\\
Our technique is not based on the comohology, rather on considering diffusion terms $G$ with the smoothness property that the corresponding stochastic integral can be expressed, by means of the integration by parts formula, in terms of two {\it non-stochastic} integrals and the noise path as well (see formula (\ref{eq2}) below), which in particular also means that our delayed system can be reduced to a deterministic delayed system with a random parameter. This idea of removing the stochastic It\^o integral is partially borrowed from Bensoussan and Frehse \cite{BF}. Nevertheless, the main novelty in this article is the fact that we are able to consider non-trivial delayed diffusion terms, which is not at all a trivial problem as stressed by Mohammed \cite{Mo}. As pointed out before, we shall investigate the existence of mild solutions to (\ref{eq}) generating a random dynamical system. Due to the mentioned transformation of the stochastic integral, the mild solution will be given in terms of the noise path. This fact
allows the establishment of the existence and uniqueness of a {\it local} mild solution.
As it will be shown in Section 2, see condition (\ref{cond}), the Banach fixed point argument will ensure the existence and uniqueness of a mild solution provided that an appropriate H\"older--norm of the noise path is sufficiently small. As a consequence, we shall consider stopping times $\{ T_i \}_{i\in \NN}$ with the property that in every interval $[T_i,T_{i+1}]$ we can find a unique local mild solution, and thus, we finally can build a global mild solution for the problem ({\ref{eq}) by glueing all these local solutions.\\
Note that the idea of considering a smoothing diffusion term was also pointed out by Mohammed and Scheutzow \cite{MSch03}. In that paper they construct the infinite-dimensional stochastic semiflow generated by the solution of stochastic functional differential equations, but when having a $p$-dimensional Brownian motion and not a Hilbert-valued Brownian motion. There are more differences with respect to our paper, since they consider a different phase space than in our setting and do not cover the existence of the global attractor associated to the flow.\\
Once the existence of a mild solution is already established, we want to investigate its longtime behavior by analyzing the existence of random attractors associated to the random dynamical systems generated by the solution of (\ref{eq}). For an overview about the theory of random attractors we refer to \cite{Arn98}, \cite{CGSchV08}, \cite{CheSch}, \cite{FlanSch}, \cite{ImSch}, \cite{Sch00}, amongst many others. In particular we will obtain that, under an appropriate smallness condition for the time lag, there exists a tempered absorbing ball which will ensure the existence of a random attractor for our retarded system. \\
The content of the paper is as follows. In Section 2 we first establish the framework in which our analysis is carried out, introducing the basic notations and assumptions, and defining the mild solution as a sum of different terms in which there are no stochastic integrals. We also prove the existence and uniqueness of local solutions in adequate time intervals, that with the help of stoping times, will be sufficient to establish the existence and uniqueness of a global mild solution. This global solution generates a random dynamical system in the space $C([-\mu,0];H)$. We also exhibit an example to illustrate the different regularity conditions for the non-linear terms appearing in (\ref{eq}). Section 3 is devoted to the study of the random attractor associated to the random dynamical system obtained in the previous section.\\
\section{Pathwise solutions}

We start this section by introducing the abstract definition of a random dynamical system.

\begin{definition} \label{sem} Let $V$ be a Banach space. A mapping $\phi:\RR^+\times V\to V$
having the {\em semigroup} property
\begin{equation*}
    \phi(t,\cdot)\circ\phi(\tau,u_0)=\phi(t+\tau,u_0),\qquad\phi(0,u_0)=u_0\qquad\text{for }t,\,\tau\in\RR^+\quad\text{and }u_0\in V
\end{equation*}
is called an autonomous dynamical system. \end{definition}

We want to consider a generalization of the concept of an autonomous dynamical system to {\em non-autonomous} and {\em random dynamical systems}. As first
we introduce as a model for a noise a {\em metric dynamical system} $(\Omega,\fF,\PP,\theta)$ where  $(\Omega,\fF,\PP)$ is a probability space and $\theta$ is a $\bB(\RR)\otimes \fF,\fF$ measurable {\em flow} $\theta=(\theta_t)_{t\in\RR}$, i.e.
\begin{align*}
\theta_t\circ\theta_\tau=\theta_{t+\tau},\quad\theta_0={\rm id}_\Omega\quad\text{for }t,\tau\in \RR,\;\omega\in\Omega
\end{align*}
such that $\PP$ is ergodic with respect to $\theta$.

In the following we consider the {\em Brownian motion} metric dynamical system: let $U$ be a separable Hilbert space
and let $C_0(\RR;U)$ be the set of continuous functions on $\RR$ with values in $U$ which are zero at zero equipped with the compact open topology.
We consider the Wiener measure $\PP$ on $\bB(C_0(\RR;U))$ having a trace--class covariance operator $Q$ on $U$. Then  Kolmogorov's fundamental theorem and Kolmogorov's theorem about a (H{\"o}lder-)continuous version give the canonical probability space $(C_0(\RR;U),\bB(C_0(\RR;U)),\PP)$, which becomes an ergodic metric dynamical system if we add the Wiener shift
\begin{equation}\label{shift}
  \theta_t\omega(\cdot)=\omega(\cdot+t)-\omega(t),\qquad \omega\in \Omega.
\end{equation}
Let us consider for some fixed $\beta\in (0,1/2)$ the set of paths $\Omega$ in $C_0(\RR;U)$ which have a finite $\beta$-H{\"o}lder-seminorm on any interval $[-k,k],\,k\in\NN$. Denote by $\|\cdot\|_{\beta,a,b}$ (and very often simply by $\|\cdot\|_\beta$) the $\beta$-H\"older-seminorm on an interval $[a,b]$.
Again by Komogorov's theorem about a H{\"o}lder-continuous version,  this set contained in $\bB(C_0(\RR;U))$ has measure one, and in addition it is invariant with respect to $\theta=(\theta_t)_{t\in\RR}$ defined by (\ref{shift}). We choose $\fF$ the trace-$\sigma$-algebra of $\bB(C_0(\RR;U))$ with respect to $\Omega$, and for the restriction of $\PP$ to this new $\sigma$-algebra we use again the symbol $\PP$. In the following we will work with this metric dynamical system $(\Omega,\fF,\PP,\theta)$. We also note that from the above canonical Brownian motion it is not hard to derive  a filtered Brownian motion $(\Omega,\fF,(\fF_t)_{t\ge 0},\PP)$ or its corresponding $\PP$-completion $(\Omega,\bar\fF,(\bar\fF_t)_{t\ge 0},\bar\PP)$, where the filtration $(\bar\fF_t)_{t\ge 0}$ satisfies the {\em usual conditions}. \\

As a generalization of the semigroup property introduced in Definition \ref{sem}, we consider a {\em random dynamical system}, RDS for shorten, for some metric dynamical system $(\Omega,\fF,\PP,\theta)$, which is given by a $\bB(\RR^+)\otimes\fF\otimes \bB(V),\bB(V)$-measurable mapping
\begin{equation*}
    \phi:\RR^+\times\Omega\times V\to V
\end{equation*}
such that
\begin{align*}
\phi(0,\omega,u_0)&=u_0,\\
   \phi(t+\tau,\omega,u_0)&=\phi(t,\theta_\tau\omega,\cdot)\circ\phi(\tau,\omega,u_0),
    \text{ for all }t,\,\tau\in\RR^+,\;u_0\in V,\;\omega\in\Omega.
\end{align*}
We emphasize that  $t$ or $u_0$ dependent exceptional sets of $\PP$-measure zero, what are typical for the classical theory of stochastic differential equations, are not allowed in the definition of a random dynamical system.

The first problem that we will face below is to show that the retarded evolution system \eqref{eq} forms a random dynamical system.\\

 Next we introduce with details the retarded stochastic system we are interested in. Let $H$ be a separable Hilbert  space with norm $|\cdot|$ and, for some fixed $\mu>0$, let $C_\mu=C([-\mu,0];H)$ be the usual space of continuous functions. We consider the delayed stochastic partial differential equation \eqref{eq}
 interpreted in a mild sense: we look for a {\em mild} solution to \eqref{eq}, which means that we aim at solving in $C([-\mu,T];H)$ the following operator equation
\begin{equation}\label{eq1bis}
  u(t)=
  \begin{cases}
  S(t)\xi(0)+\displaystyle{\int_0^tS(t-r)F(u_r)dr+\int_0^tS(t-r)G(u_r)dW(r)}, \, t\in [0,T],\\
  \xi(t), \, t\in [-\mu,0],
  \end{cases}
\end{equation}
for the initial data $\xi \in C_\mu$ and for the $U$-valued Brownian motion $W$, defined over $(\Omega,\bar\fF,(\bar\fF_t)_{t\ge 0},\bar\PP)$ and with covariance given by a trace-class operator $Q$.

Now we describe the assumptions on the coefficients of this equation.
$-A$ is a strictly positive and symmetric operator
with a compact inverse generating a $C_0$ analytic--semigroup $S=(S(t))_{t\in\RR^+}$ on $H$. For $\gamma\geq 0$ we consider the spaces $D((-A)^\gamma)$, defined in the usual way, see Sell and You \cite{SellYou}, Chapter II, which are compactly embedded in $H$. Note that under these conditions the following estimates are satisfied: there exists a $\lambda>0$ such that
\begin{align}\label{eq5}
\begin{split}
|S(t)| &\leq M e^{-\lambda t},\;M\geq 1,\\
  |(-A)^\gamma S(t)|_{L(H)}& \le c_\gamma \frac{1}{t^\gamma}e^{-\lambda t},\;\gamma\in [0,1),\\
 |S(t)-{\rm id}|_{L(D((-A)^{\sigma}),D((-A)^{\theta}))} &\le c_{\sigma,\theta}
t^{\sigma-\theta}, \;\text{for }\theta\geq 0,\quad \sigma\in
[\theta,1+\theta]
\end{split}
\end{align}
for $t>0$. For simplicity, the previous constants $M$, $c_\gamma$ and $c_{\sigma,\theta}$ will be assumed to be equal to $1$. From these inequalities, for $0\leq q\leq r\leq s\leq t$, we can derive that
\begin{align}\label{eq30}
\begin{split}
|S(&t-r)-S(t-q)|_{L(D((-A)^{\delta}),D((-A)^{\gamma}))}\le c_{\delta,\gamma}(r-q)^\alpha(t-r)^{-\alpha-\gamma+\delta},
\end{split}
\end{align}
for $\gamma \geq 0$, $\alpha \in [0,1]$ and $\delta \geq \alpha$. The constant $c_{\delta,\gamma}$ is also assumed to be equal to $1$.\\

Now we describe the assumptions regarding the nonlinear terms $F$ and $G$. For $F:C_\mu\mapsto H$ we assume global Lipschitz continuity, that is, there exists $L_F>0$ such that
\begin{equation}\label{eq3}
|F(x)-F(y)|\le L_{F}\|x-y\|_\mu,\qquad\text{for } x,\,y\in C_\mu.
\end{equation}

For $G: C_\mu\mapsto L_{2,Q}(U; H)$ we also assume Lipschitz continuity in the corresponding spaces, i.e.,
\begin{equation}\label{G1}
\|G(x)-G(y)\|_{L_{2,Q}(U;H)}\leq L_{G}\|x-y\|_{\mu}, \qquad\text{for } x,\,y\in C_\mu,
\end{equation}
where $L_{2,Q}(U;H)$ denotes the Hilbert-Schmidt space of linear operators from $U$ to $H$ related to the trace-class operator $Q$, see Da Prato and Zabczyk \cite{DaPrato} Chapter 4. \\

For any $T>0$ let us consider the space $C([-\mu,T];H)$, and denote by  $|||u|||$ the norm on this space, avoiding the typing of $T$ in the previous norm for the sake of exposition. As usual, for $u\in C([-\mu,T];H)$ and $t\in [0,T]$, the term $u_t \in C_\mu$ is given by $u_t(s)=u(t+s)$ with $s$ belonging to the interval $[-\mu,0]$. This means that $u_t$ tracks the history of the process over the delay period.

These assumptions allow us to conclude the existence and uniqueness of a mild
solution of \eqref{eq1bis} in the sense of Da Prato and Zabczyk \cite{DaPrato} Chapter 7.
We refer to the paper of Taniguchi {\it et al.} \cite{taniguchi}, where the equation contains a delay similarly to ours, and \cite{KL}.

\begin{theorem} Assume that the operators  $F$ and $G$ satisfy the assumptions \eqref{eq3} and
\eqref{G1}. Then, there exists a unique global  stochastic process $(u(t))_{t\in[-\mu,T]}$ with paths in $C([-\mu,T];H)$ and $u(s)=\xi(s)$ for $s\in [-\mu,0]$ such that
the stochastic process $(u_t)_{t\in [0,T]}\in C_\mu$ is $(\bar\fF_t)_{t\in [0,T]}$-predictable and 
\eqref{eq1bis} holds for any $t\in[0,T]$ and for any $\bar \fF_0$-measurable random variable $\xi$ in $C_\mu$ almost surely.
\end{theorem}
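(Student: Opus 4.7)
The plan is to set up the right-hand side of \eqref{eq1bis} as a map on a complete metric space of predictable processes and apply the Banach fixed point theorem, mirroring the classical Da Prato--Zabczyk scheme with the extra bookkeeping required by the delay. Concretely, for $T_1\in(0,T]$ to be chosen, I would work in the space $X_{T_1}$ of $(\bar\fF_t)$-predictable processes $u:[-\mu,T_1]\times\Omega\to H$ with continuous paths, satisfying $u(s)=\xi(s)$ for $s\in[-\mu,0]$ and having finite norm
\[
\|u\|_{X_{T_1}}^2 \;=\; \EE\sup_{t\in[-\mu,T_1]}|u(t)|^2,
\]
and define the operator $\kK u$ by the right-hand side of \eqref{eq1bis} on $[0,T_1]$, extended by $\xi$ on $[-\mu,0]$.

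The key step is to verify that $\kK$ is a contraction on $X_{T_1}$ for $T_1$ small enough (depending only on $L_F$, $L_G$, $Q$ and the semigroup constants). For the drift term I would use the semigroup bound in \eqref{eq5} together with \eqref{eq3}, while for the diffusion term I would invoke the Burkholder--Davis--Gundy inequality in the Hilbert-valued setting, obtaining
\[
\EE\sup_{t\le T_1}\Bigl|\int_0^t S(t-r)G(u_r)\,dW(r)\Bigr|^2 \;\le\; C\,\EE\int_0^{T_1}\|S(T_1-r)G(u_r)\|_{L_{2,Q}(U;H)}^2\,dr,
\]
and then applying \eqref{G1}. The delay is handled via the elementary bound $\|u_r-v_r\|_\mu\le \sup_{-\mu\le s\le r}|u(s)-v(s)|$ valid for $r\in[0,T_1]$, which allows the supremum over $[-\mu,T_1]$ to close. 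Together these give
\[
\|\kK u-\kK v\|_{X_{T_1}}^2 \;\le\; C(T_1 L_F^2 + L_G^2)\,T_1\,\|u-v\|_{X_{T_1}}^2,
\]
so for $T_1$ small enough we obtain a strict contraction and hence a unique local fixed point.

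Next I would extend the local solution to the full interval $[0,T]$. Since the smallness of $T_1$ only depends on the Lipschitz constants and not on the initial datum $\xi$, I can iterate the fixed point argument on successive intervals $[kT_1,(k+1)T_1]\cap[0,T]$, each time taking the $C_\mu$-valued initial datum $u_{kT_1}$ (which is $\bar\fF_{kT_1}$-measurable by construction) as the new starting point. Gluing these local solutions yields a process $u\in C([-\mu,T];H)$ with $(u_t)_{t\in[0,T]}$ predictable in $C_\mu$ and satisfying \eqref{eq1bis}. Uniqueness on $[0,T]$ follows since two global solutions would have to coincide on each subinterval by the local uniqueness.

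I expect the main technical point to be the careful handling of measurability and predictability of the $C_\mu$-valued history process $t\mapsto u_t$, together with the verification that the stochastic-integral estimate truly controls the $C_\mu$-supremum uniformly in $t\in[0,T_1]$; the factorization method of Da Prato--Zabczyk (Chapter 7) provides the cleanest route to pass from an $L^2$ pointwise bound on the stochastic convolution to a bound on its supremum, and I would use it rather than BDG directly if the semigroup smoothing is needed. All other estimates reduce to routine applications of \eqref{eq5}, \eqref{eq3} and \eqref{G1}, as in \cite{DaPrato,taniguchi,KL}.
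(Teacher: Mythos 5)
The paper offers no proof of this theorem at all: it simply defers to Da Prato--Zabczyk (Chapter 7), to Taniguchi et al.\ \cite{taniguchi} and to \cite{KL}, and your outline---a Banach fixed point argument in a space of predictable processes with the $\EE\sup|\cdot|^2$ norm, a maximal inequality or the factorization method for the stochastic convolution, and iteration over successive intervals of a fixed length determined only by $L_F$, $L_G$ and the semigroup constants---is exactly the standard argument those references carry out, so your proposal is correct and essentially coincides with the proof the paper relies on. The only points worth adding are that at the exponent $p=2$ the factorization method does not directly yield the supremum bound (here one uses instead the Kotelenez--Tubaro maximal inequality, available since $S$ is a contraction semigroup, or works in $L^p$ with $p>2$), and that for a general $\bar\fF_0$-measurable $\xi$ with no moment assumptions one finishes with a routine localization on the sets $\{\|\xi\|_\mu\le n\}$.
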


Our purpose goes beyond this existence result. Precisely speaking, we want to show the existence of a cocycle version of the above solution. To do this we have to impose stronger conditions on $G$, in order to obtain a mild solution where  exceptional sets do not appear, since these sets contradict the definition of a cocycle.

We also assume that $G$ is smoothing in the following sense: for any $u\in C([-\mu,T];H)$ the mapping
\begin{equation*}
  [0,T]\ni t\mapsto G(u_t)\in C^1([0,T];L(U;H))
\end{equation*}
such that the derivative of this mapping is given by another operator $K$ with a special structure, that is
\begin{equation}\label{GK}
  \frac{d}{dt}G(u_t)=K(u_t)
\end{equation}
with $K:C_\mu \mapsto L(U;H)$ being Lipschitz continuous with Lipschitz constant denoted by $L_K$. In addition to the above conditions, let us also impose the stronger condition that  $G$ is Lipschitz continuous with values in $L(U;D((-A)^\nu))$, the space of bounded linear operators from $U$ to $D((-A)^\nu)$, where $\nu\in (0,1)$:
\begin{equation}\label{G3}
\|G(x)-G(y)\|_{L(U;D((-A)^\nu))}\leq L_{G,\nu}\|x-y\|_{\mu}, \qquad\text{for } x,\,y\in C_\mu.
\end{equation}
Note that from the above conditions we can just conclude that $G$ is a Lipschitz mapping from $C_\mu$ into $L(U;H)$.

For an example of non-linear terms $F$ and $G$ satisfying the previous assumptions see the Example \ref{example} below.

In the following we shall be able to get rid of the stochastic integral by applying an integration by parts formula, which allows us to handle our equation in a pathwise way, and which will turn out to be essential when proving the cocycle property for the solution operator to the stochastic delayed system (\ref{eq1bis}). 

In what follows $\omega(t)$, $t\in \RR$, represents the canonical version of the Brownian motion $W$.

Avoiding the stochastic integral is possible thanks to the existence of the above operator $K$, since then we can give the following interpretation to the stochastic integral
\begin{align}\label{eq2}
\begin{split}
  \int_0^tS(t-r)G(u_r)d\omega(r)&=G(u_t)\omega(t)+\int_0^tS(t-r)AG(u_r)\omega(r)dr
  \\
  &-\int_0^tS(t-r)K(u_r)\omega(r)dr,
  \end{split}
\end{align}
which follows by an  application of the integration by parts formula (note that $\omega(0)=0$ and that $S$ and the operator $A$ commute).
For more motivations to the integration by parts formula we refer to Bensoussan and Frehse \cite{BF}.

Now we are going to establish the existence and uniqueness of a mild solution to our delayed equation (\ref{eq}) by taking into account the expression of the stochastic integral given by (\ref{eq2}). As we shall prove, in a first step we obtain a {\it local} mild solution, in the sense that there exists a random variable in $\RR^+$, denoted by $T(\omega)$, such that there exists exactly one $u\in C([-\mu,T(\omega)];H)$ such that \eqref{eq1bis} is satisfied.

\begin{theorem}\label{t1}
Assume that $F, G$ satisfy the assumptions \eqref{eq3} and \eqref{G3} such that $G$ has the special structure \eqref{GK} and $K$ is Lipschitz continuous. Then, for any $\xi\in C_\mu$ and any $\omega\in\Omega$ there exists a mild local solution to \eqref{eq1bis}, that is, there exists $T(\omega)>0$ such that this equation has a unique mild solution $u\in C([-\mu,T(\omega)];H)$, i.e., $u$ satisfies
\begin{equation}\label{eq?}
u(t)=
\begin{cases}
S(t)\xi(0)+\displaystyle {\int_0^t S(t-\tau)F(u_\tau){d}\tau+\int_0^tS(t-r)AG(u_r)\omega(r)dr}\\
\displaystyle {+G(u_t)\omega(t)-\int_0^tS(t-r)K(u_r)\omega(r)dr},\quad \text{for}\quad t\in[0,T(\omega)],\\
    \xi(t),\quad \text{for}\quad t\in[-\mu,0],
\end{cases}
\end{equation}
and depends continuously on $\xi$.
\end{theorem}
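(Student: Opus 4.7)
The plan is a Banach fixed-point argument applied to the operator $\mathcal{T}$ on the complete metric space
\[
V_\xi^T := \{u \in C([-\mu,T];H) : u|_{[-\mu,0]} = \xi\}
\]
(equipped with $|||\cdot|||$), where $\mathcal{T}u$ equals $\xi(t)$ on $[-\mu,0]$ and the right-hand side of \eqref{eq?} on $[0,T]$. A fixed point of $\mathcal{T}$ is precisely a mild solution. The key insight, already advertised in the introduction, is that each of the three contributions involving $\omega$ carries a factor $\|\omega\|_{\beta,0,T}\,T^\beta$ (since $\omega(0)=0$ and $\omega$ is $\beta$-H\"older), so all of them vanish as $T\to 0$.

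Before turning to the contraction, I would verify $\mathcal{T}(V_\xi^T)\subset V_\xi^T$. The only delicate point is the $AG$-integral: writing
\[
S(t-r)\,A\,G(u_r) = -(-A)^{1-\nu} S(t-r)\,(-A)^\nu G(u_r),
\]
the bound $|(-A)^{1-\nu}S(t-r)|_{L(H)}\le (t-r)^{\nu-1}e^{-\lambda(t-r)}$ from \eqref{eq5} together with \eqref{G3} produces an integrable singularity since $\nu>0$, and dominated convergence yields continuity in $t$. Continuity of the other terms is standard, using continuity of the shift $t\mapsto u_t\in C_\mu$ for any $u\in C([-\mu,T];H)$. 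Because $\omega(0)=0$, the boundary term and both integrals vanish at $t=0$, so $\mathcal{T}u(0)=\xi(0)$ and $\mathcal{T}u\in C([-\mu,T];H)$.

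The contraction step proceeds term by term for $u,v\in V_\xi^T$. Assumption \eqref{eq3} combined with $|S|\le 1$ bounds the drift contribution by $L_F T\,|||u-v|||$. Since $|\omega(t)|\le \|\omega\|_{\beta,0,T}\,T^\beta$ on $[0,T]$, assumption \eqref{G1} bounds $G(u_t)\omega(t)-G(v_t)\omega(t)$ by $L_G\|\omega\|_{\beta,0,T}\,T^\beta\,|||u-v|||$. The $AG$-integral is controlled via the factorization above and \eqref{G3} by
\[
L_{G,\nu}\,|||u-v|||\int_0^t (t-r)^{\nu-1}|\omega(r)|\,dr \le \tfrac{L_{G,\nu}}{\nu}\,\|\omega\|_{\beta,0,T}\,T^{\beta+\nu}\,|||u-v|||,
\]
and the $K$-integral by $L_K T\,\|\omega\|_{\beta,0,T}\,T^\beta\,|||u-v|||$. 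Summing, the Lipschitz constant of $\mathcal{T}$ on $V_\xi^T$ is bounded by
\[
C\Bigl(T + \|\omega\|_{\beta,0,T}\bigl(T^\beta + T^{\beta+\nu} + T^{1+\beta}\bigr)\Bigr),
\]
which tends to $0$ as $T\to 0$. Choosing $T(\omega)>0$ so that this quantity is $\le 1/2$ yields the smallness condition mentioned in the introduction and makes $\mathcal{T}$ a strict contraction, so Banach's theorem produces the unique solution.

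Continuous dependence on $\xi$ follows at once from the same estimates: for $\xi,\xi'\in C_\mu$ on a common $T(\omega)$ valid for both, the associated fixed points $u,u'$ satisfy, using $|S(t)(\xi(0)-\xi'(0))|\le\|\xi-\xi'\|_\mu$,
\[
|||u-u'||| \le \|\xi-\xi'\|_\mu + \tfrac{1}{2}|||u-u'|||,
\]
hence $|||u-u'|||\le 2\|\xi-\xi'\|_\mu$. The main nuisance I expect is bookkeeping: one must check that the constants in the contraction estimate do not depend on $\xi$ itself but only on the Lipschitz constants in \eqref{eq3}, \eqref{G1}, \eqref{G3} and on $L_K$, so that the threshold $T(\omega)$ can be taken uniform in a neighbourhood of any initial condition. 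A secondary technical point, already exploited above, is that $\omega(0)=0$ is what supplies the factor $T^\beta$ driving the contraction; without the smoothing assumption \eqref{GK} enabling the integration by parts \eqref{eq2}, this gain would be unavailable.
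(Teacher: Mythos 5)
Your proposal is correct and follows essentially the same route as the paper: the same Banach fixed-point argument on $C^\xi([-\mu,T];H)$, the same factorization $S(t-r)AG(u_r)=-(-A)^{1-\nu}S(t-r)(-A)^{\nu}G(u_r)$ with \eqref{G3}, the same use of $|\omega(r)|\le\|\omega\|_{\beta}r^{\beta}$ to obtain the smallness condition \eqref{cond}, and the same estimate for continuous dependence on $\xi$. Nothing essential is missing.
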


\begin{proof}
We consider the complete metric subspace $C^\xi([-\mu,T];H)$ of functions  $u\in C([-\mu,T];H)$ with $u(s)=\xi(s)$ for $s\in [-\mu,0]$.
Let us write for a while $T$ instead of $T(\omega)$.
For such a $T>0$ to be determined later, consider the operator
$\mathcal{T}\colon C^\xi([-\mu,T];H)\rightarrow C^\xi([-\mu,T];H)$ defined, for $t\geq 0$, by
  \begin{align*}
   \mathcal{T} (u)(t)&=S(t)\xi(0)+\int_0^t S(t-\tau)F(u_\tau){d}\tau+G(u_{t})\omega(t)+\int_0^{t}S(t-r)AG(u_r)\omega(r)dr\\
   &-\int_0^{t}S(t -r)K(u_r)\omega(r)dr.
  \end{align*}

We want to check that this operator is a self-mapping and in addition a contraction, and thus it has a unique fixed point in $C^\xi([-\mu,T];H)$, where $T$ will be determined according to the Banach fixed point theorem. First, taking into account that $\omega \in \Omega$, the Lipschitz continuity of $F$, $G$ and $K$, and the property (\ref{eq5}), we obtain the above integrals define continuous mappings from $[0,T]$ into $H$ which are zero for the time parameter zero, see \cite{BF}. In particular, for the second integral we note that $[0,t]\ni r\mapsto |AS(t-r)G(u_r)\omega(r)|$ is integrable by the regularity of $G$. Therefore, it follows easily that $t\mapsto\tT(u)(t)$ is continuous such that $\tT$ maps $C^\xi([-\mu,T];H)$
into itself.

Now we take $u^1,\,u^2 \in C^\xi([-\mu,T];H)$. Due to the Lipschitz regularity of $F$, $G$ and $K$, for $t\in [0,T]$ we get
\begin{align*}
|(\mathcal T(u^1)- & \mathcal T(u^2))(t)| \leq \bigg(\bigg|\int_0^t S(t-\tau)(F(u^1_\tau)-F(u^2_\tau)){d}\tau\bigg|+|(G(u^1_{t})-G(u^2_{t}))\omega(t)|\\
& \quad +\bigg|\int_0^{t}(-A)^{1-\nu}S(t-r)(-A)^\nu(G(u^1_r)-G(u^2_r))\omega(r)dr\bigg|\\
& \quad +\bigg|\int_0^{t}S(t -r)(K(u^1_r)-K(u_r^2))\omega(r)dr\bigg|\bigg)\\
&\leq L_F\sup_{r\in[0,t]}\|u_r^1-u_r^2\|_\mu t+L_G \|u_t^1-u_t^2\|_\mu\|\omega\|_\beta t^\beta\\
& \quad +  \|\omega\|_\beta \int_0^t\frac{L_{G,\nu}\|u_r^1-u_r^2\|_\mu}{(t-r)^{1-\nu}}r^\beta dr+  \|\omega\|_\beta \int_0^t\ L_K \|u_r^1-u_r^2\|_\mu r^\beta dr\\
&\leq L_F\sup_{r\in[0,t]}\|u_r^1-u_r^2\|_\mu t+L_G \|u_t^1-u_t^2\|_\mu\|\omega\|_\beta t^\beta\\
&\quad+\sup_{r\in[0,t]}\|u_r^1-u_r^2\|_\mu (L_{G,\nu} t^{\beta+\nu} +L_K t^{\beta+1}) \|\omega\|_\beta,
\end{align*}
and therefore
\begin{align*}
|||\mathcal T(u^1)-\mathcal T(u^2)|||&\leq C|||u^1-u^2||| T+C|||u^1-u^2|||  (T^\beta+T^{1+\beta}+T^{\beta+\nu}) \|\omega\|_\beta,
\end{align*}

with $C$ a positive constant. Taking  $T:=T(\omega)$ small enough such that
\begin{align}\label{cond}
CT+C(T^\beta+T^{1+\beta}+T^{\beta+\nu}) \|\omega\|_\beta\leq 1/2,
\end{align}
 we have $|||\mathcal T(u^1)-\mathcal T(u^2)||| \leq 1/2 |||u^1-u^2|||$. Then the Banach fixed point theorem gives a solution to (\ref{eq1bis}).\\
Moreover, since the contraction constant is independent of $\xi$ we have that the solution depends continuously on $\xi$. Let us show this statement with some details. In order to prove the continuous dependence of the solution in the delay input $\xi$, consider also $\tilde \xi \in C_\mu$ and let $\tilde u\in C([-\mu,T];H)$ be the unique  solution to (\ref{eq1bis}) with initial condition $\tilde \xi$. Then, in a similar manner as we have proved the contraction property, for $t\geq 0$ we obtain
\begin{align*}
|u(t)-\tilde u(t)| &\leq \|\xi-\tilde \xi\|_\mu+C|||u-\tilde u|||  t+ C|||u-\tilde u|||  (t^\beta+t^{\beta+\nu}+t^{1+\beta}) \|\omega\|_\beta,
\end{align*}
therefore, taking $T$ small enough such that (\ref{cond}) holds, we obtain
\begin{align*}
|||u-\tilde u||| & \leq 2\|\xi-\tilde \xi\|_\mu +\frac{1}{2}|||u-\tilde u|||
\end{align*}
and thus the continuous dependence on the initial condition $\xi$ follows.

\end{proof}

Up to now we have been able to prove the existence of a unique {\it local} mild solution $u\in C([-\mu,T(\omega)];H)$ to (\ref{eq?}), since, as we have seen, for instance the term $G(u_t)\omega(t)$ produces a sort of Lipschitz constant depending on $\|\omega\|_\beta$. In what follows we will derive the existence of a {\it global} mild solution.

Denote $T_1(\omega)=T(\omega)$ and let us build the solution in the next time interval, say $[T_1(\omega), T_2(\omega)]$, i.e., we need to find $T_2(\omega)$ such that we also have a local mild solution in the last interval.  For $t\geq T_1(\omega)$ (which in the following computations will be denoted by $T_1$ for short), similarly to (\ref{eq2}) we get

\begin{align*}
\begin{split}
& \int_{T_1}^tS(t-r)G(u_r)d\omega(r)=G(u_t)\omega(t)-S(t-T_1)G(u_{T_1})\omega(T_1)\\
& \quad +\int_{T_1}^tS(t-r)AG(u_r)\omega(r)dr -\int_{T_1}^tS(t-r)K(u_r)\omega(r)dr\\
&=G(u_t)\omega(t)-S(t-T_1)G(u_{T_1})\omega(T_1)+\int_{0}^{t-T_1}S(t-T_1-r)AG(u_{T_1+r})\omega(r+T_1)dr\\
& \quad -\int_{0}^{t-T_1}S(t-T_1-r)K(u_{T_1+r})\omega(r+T_1)dr\\
&=G(u_t)\omega(t)-S(t-T_1)G(u_{T_1})\omega(T_1)+\int_0^{t-T_1} AS(t-T_1-r)G(u_{T_1+r})\theta_{T_1} \omega(r)dr\\
&\quad +\int_0^{t-T_1} AS(t-T_1-r)G(u_{T_1+r})\omega(T_1)dr\\
&\quad -\int_0^{t-T_1} S(t-T_1-r)K(u_{T_1+r})\theta_{T_1} \omega(r)dr-\int_0^{t-T_1} S(t-T_1-r)K(u_{T_1+r})\omega(T_1)dr\\
&=G(u_{T_1+(t-T_1)})\theta_{T_1}\omega(t-T_1)-\int_0^{t-T_1} S(t-T_1-r)K(u_{T_1+r})\theta_{T_1} \omega(r)dr\\
&\quad +\int_0^{t-T_1} AS(t-T_1-r)G(u_{T_1+r})\theta_{T_1} \omega(r)dr,
\end{split}
\end{align*}
where the last equality follows from the fact that
\begin{align}\label{tr}
\begin{split}
&-\int_0^{t-T_1} S(t-T_1-r)K(u_{T_1+r})\omega(T_1)dr\\
&+\int_0^{t-T_1} AS(t-T_1-r)G(u_{T_1+r})\omega(T_1)dr\\
=&-\int_0^{t-T_1} \frac{d}{dr}(S(t-T_1-r)G(u_{T_1+r}))\omega(T_1)dr\\
= &-G(u_t)\omega(T_1)+S(t-T_1)G(u_{T_1})\omega(T_1).
\end{split}
\end{align}

Therefore, we are interested in solving

\begin{equation*}
u(t)=
\begin{cases}
S(t-T_1(\omega))u(T_1(\omega))+\displaystyle{\int_0^{t-T_1} S(t-T_1-r)F(u_{r+T_1})dr}\\
\displaystyle{+G(u_{T_1+(t-T_1)})\theta_{T_1}\omega(t-T_1)-\int_0^{t-T_1} S(t-T_1-r)K(u_{T_1+r})\theta_{T_1} \omega(r)dr}\\
\displaystyle{+\int_0^{t-T_1} AS(t-T_1-r)G(u_{T_1+r})\theta_{T_1} \omega(r)dr}, \quad t-T_1(\omega)\geq 0,\\
    u_1(t),\quad t-T_1(\omega)\in[-\mu,0],
\end{cases}
\end{equation*}
where $u_1$ denotes the solution obtained on $[-\mu,T_1(\omega)]$. But solving the above system is equivalent to solve the problem for $y(s)=u(s+T_1(\omega))$
\begin{equation*}
y(s)=
\begin{cases}
S(s)\hat\xi(0)+\displaystyle{\int_0^{s} S(s-r)F(y_{r})dr+G(y_{s})\theta_{T_1}\omega(s)}\\
-\displaystyle{\int_0^{s} S(s-r)K(y_{r})\theta_{T_1} \omega(r)dr+\int_0^{s} AS(s-r)G(y_{r})\theta_{T_1} \omega(r)dr}, \, s\geq 0,\\
    \hat\xi(s)=u_1(s+T_1(\omega)),\quad s\in[-\mu,0],
\end{cases}
\end{equation*}
where $u_1$ is the solution to \eqref{eq?} and $s$ above is given by $s:=t-T_1.$
This means that we have to solve the same problem than in the previous step, but with initial condition $\hat\xi$ and noise $\theta_{T_1(\omega)}\omega$. Therefore, following the same steps than before, we obtain a new piece given by a local solution defined now in the interval $[T_1(\omega)-\mu, T_1(\omega)+T_1(\theta_{T_1(\omega)}\omega)]$, and thus we define $T_2(\omega)$ as

\begin{equation*}
  T_2(\omega)=T_1(\omega)+T_1(\theta_{T_1(\omega)}\omega).
\end{equation*}

Finally, to get a global mild solution to \eqref{eq?} it suffices to define appropriate stopping times, in the following way: for $i\in\NN$, considering that $T_0(\omega)=0$ and
\begin{equation}\label{eq9}
   T_i(\omega)=T_{i-1}(\omega)+T_1(\theta_{T_{i-1}(\omega)}\omega),
\end{equation}

it can be proven that then $\lim_{i\to\infty}T_i(\omega)=\infty$, see Lemma \ref{stop} below, which then concludes the proof of the existence of a global solution. Therefore we have proven the following result:

\begin{theorem}\label{t1global}
Under the assumptions of Theorem \ref{t1}, for any $\xi\in C_\mu$ and any $\omega\in\Omega$ there exists a unique mild global solution to \eqref{eq1bis}.\end{theorem}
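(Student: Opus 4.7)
The plan is to combine the local existence and uniqueness statement of Theorem \ref{t1} with the shift-and-glue procedure already carried out in the excerpt, and then to prove that the stopping times $T_i(\omega)$ defined recursively in \eqref{eq9} exhaust $[0,\infty)$. Given the latter, the global solution is obtained simply by concatenating the local solutions $u^{(i)}$ produced on $[T_{i-1}(\omega),T_i(\omega)]$, since the construction preceding the theorem statement shows that on each such interval the equation \eqref{eq?} for $y(s)=u(s+T_{i-1}(\omega))$ has exactly the same form as on $[0,T_1(\omega)]$ but driven by the shifted noise $\theta_{T_{i-1}(\omega)}\omega$ and with initial segment $\hat\xi(s)=u^{(i-1)}(T_{i-1}(\omega)+s)\in C_\mu$. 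Theorem \ref{t1} applied to this shifted problem yields a local solution of length $T_1(\theta_{T_{i-1}(\omega)}\omega)$, which is precisely $T_i(\omega)-T_{i-1}(\omega)$. Uniqueness on each piece propagates, giving a unique $u\in C([-\mu,\sup_i T_i(\omega));H)$.

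To make $T_1(\omega)$ genuinely well-defined I would take it to be, e.g., the supremum of those $T>0$ for which the smallness condition \eqref{cond} holds when $\|\omega\|_\beta$ is interpreted as the $\beta$-H\"older seminorm of $\omega$ on $[0,T]$; this supremum is attained because the seminorm is non-decreasing in $T$ and the left-hand side of \eqref{cond} is continuous in $T$. With this convention, each $T_1(\theta_{T_{i-1}(\omega)}\omega)$ is strictly positive and the recursion \eqref{eq9} gives a strictly increasing sequence.

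The main obstacle is showing $\lim_{i\to\infty}T_i(\omega)=\infty$ (the content of the referenced Lemma \ref{stop}). I would argue by contradiction: suppose $T^*(\omega):=\lim_i T_i(\omega)<\infty$. Because $\omega\in\Omega$ has finite $\beta$-H\"older seminorm on every compact interval, the number
\[
 M(\omega):=\|\omega\|_{\beta,0,T^*(\omega)+1}
\]
is finite. By translation invariance of the H\"older seminorm,
\[
 \|\theta_{T_{i-1}(\omega)}\omega\|_{\beta,0,T}=\|\omega\|_{\beta,T_{i-1}(\omega),T_{i-1}(\omega)+T}\le M(\omega)
\]
whenever $T_{i-1}(\omega)+T\le T^*(\omega)+1$. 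Hence a value $T=T^{\min}(\omega)>0$ satisfying
\[
 CT+C\bigl(T^\beta+T^{1+\beta}+T^{\beta+\nu}\bigr)M(\omega)\le \tfrac12
\]
works simultaneously in \eqref{cond} for every $\theta_{T_{i-1}(\omega)}\omega$ with $i$ large, and it depends on $\omega$ only through $M(\omega)$. Consequently
\[
 T_i(\omega)-T_{i-1}(\omega)=T_1(\theta_{T_{i-1}(\omega)}\omega)\ge T^{\min}(\omega)>0
\]
for all sufficiently large $i$, contradicting $T_i(\omega)\to T^*(\omega)<\infty$. This forces $T_i(\omega)\to\infty$ and completes the global existence argument; uniqueness is inherited from the local step of Theorem \ref{t1}.
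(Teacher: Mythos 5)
Your proposal is correct and follows essentially the same route as the paper: the gluing construction with shifted noise $\theta_{T_{i-1}(\omega)}\omega$ and initial segment taken from the previous piece is exactly what precedes the theorem, and your divergence argument is the content of the paper's Lemma \ref{stop}, resting on the same key observation that $\|\theta_{T_{i-1}(\omega)}\omega\|_{\beta,0,T}\le\|\omega\|_{\beta,0,t}$ whenever the shifted interval stays inside $[0,t]$, which yields a uniform positive lower bound on the increments $T_i(\omega)-T_{i-1}(\omega)$. The only cosmetic difference is that you argue by contradiction from a finite limit $T^*(\omega)$, while the paper argues directly that $T_i(\omega)\ge i s^\ast$ eventually exceeds any prescribed $t$.
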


We now prove that under our particular assumptions the cocycle property holds. Although this result is expected we demonstrate the existence of an RDS when the equation is given in the sense of mild solutions including terms stemming from the integration by parts formula.

\begin{theorem}\label{t2}
The global mild solution $u$ of (\ref{eq2}) generates a random dynamical system $$\phi:\mathbb{R}^+\times \Omega\times C_\mu \to C_\mu$$ given by $\phi(t,\omega,\xi)(\cdot)=u_t(\cdot)$, i.e.,
\begin{equation*}
\phi(t,\omega,\xi)(\cdot)=
\begin{cases}
\displaystyle{S(t+\cdot)\xi(0)+\int_0^{t+\cdot} S(t+\cdot-r)F(u_r) dr+G(u_{t+\cdot})
  \omega(t+\cdot)}\\
  + \displaystyle{\int_0^{t+\cdot}AS(t+\cdot-r)G(u_r)\omega(r)d\tau-\int_0^{t+\cdot}S(t+\cdot-r)K(u_r)\omega(r)d\tau}, \\
  \quad \text{ for } t+\cdot \geq 0,\\
\xi(t+\cdot),  \,  \text{ for } t+\cdot \leq  0.
\end{cases}\end{equation*}
Moreover,  $\xi \mapsto\phi(t,\omega,\xi)$ is continuous on $C_\mu$ for $t\ge 0$ and $\omega\in \Omega$.
\end{theorem}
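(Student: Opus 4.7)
The plan is to verify the four conditions required of a random dynamical system: the identity $\phi(0,\omega,\xi)=\xi$, the cocycle identity, joint measurability in $(t,\omega,\xi)$, and continuity in $\xi$. The first is immediate since $u(s)=\xi(s)$ on $[-\mu,0]$ and hence $u_0=\xi$.

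For the cocycle property, fix $\tau\geq 0$, $\omega\in\Omega$ and $\xi\in C_\mu$, and let $u$ be the global pathwise solution to \eqref{eq?} produced by Theorem \ref{t1global}. Define $v(s)=u(s+\tau)$ for $s\geq -\mu$; then $v_0=u_\tau=\phi(\tau,\omega,\xi)$ and $v_t(\cdot)=u_{t+\tau}(\cdot)=\phi(t+\tau,\omega,\xi)(\cdot)$ for every $t\geq 0$. I would then show that $v$ solves the mild equation \eqref{eq?} driven by $\theta_\tau\omega$ with initial history $u_\tau$; by the uniqueness part of Theorem \ref{t1global}, this forces $v(\cdot)=\phi(\cdot,\theta_\tau\omega,u_\tau)(\cdot)$, which is exactly the cocycle identity at the level of histories. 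The verification that $v$ satisfies the shifted equation is a verbatim repetition of the calculation the authors already performed with $T_1$ replaced by a generic $\tau$: start from the mild equation on $[\tau,t+\tau]$, apply the integration-by-parts identity \eqref{eq2} to the stochastic integral, perform the change of variable $r\mapsto r+\tau$, decompose $\omega(r+\tau)=\theta_\tau\omega(r)+\omega(\tau)$, and cancel every term carrying $\omega(\tau)$ through the identity \eqref{tr}.

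For measurability, observe that the Picard operator $\tT$ of Theorem \ref{t1} depends on $(\omega,\xi)$ only through Bochner integrals with continuous integrands and through pointwise evaluations of $\omega$, so its iterates are jointly continuous in $(\omega,\xi)$. Their uniform limit, which is the local mild solution, is therefore jointly measurable and continuous in $t$. The stopping times $T_i(\omega)$ in \eqref{eq9} are measurable functions of $\omega$ alone, and by Lemma \ref{stop} they tend to $\infty$. Because on each piece $[T_i(\omega),T_{i+1}(\omega)]$ the Picard construction takes as data the measurable pair $(\theta_{T_i(\omega)}\omega, u_{T_i(\omega)})$, a standard measurable gluing transfers joint measurability to the global solution $u$ and hence to $\phi$.

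Continuity in $\xi$ follows from the estimate $|||u-\tilde u|||\leq 2\|\xi-\tilde\xi\|_\mu$ on $[-\mu,T_1(\omega)]$ derived at the end of the proof of Theorem \ref{t1}; the crucial point is that $T_1(\omega)$ depends on $\omega$ only through $\|\omega\|_\beta$ via \eqref{cond}, not on $\xi$. Iterating the same contraction argument across $[T_i(\omega),T_{i+1}(\omega)]$ with shifted data produces a bound of the form $2^{i+1}\|\xi-\tilde\xi\|_\mu$, which yields continuity on every bounded time interval. The main obstacle I anticipate is the careful measurable gluing at the random partition $\{T_i(\omega)\}$, since one has to check that the measurability of each Picard iterate passes through the concatenation at the random times $T_i(\omega)$; once this is done, the cocycle identity itself follows mechanically from the authors' own integration-by-parts shift calculation, applied at a generic time in place of $T_1$.
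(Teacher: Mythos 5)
Your proposal is correct and follows essentially the same route as the paper: the cocycle identity is obtained from the shift computation (integration by parts, the decomposition $\omega(\tau+r)=\theta_\tau\omega(r)+\omega(\tau)$, and cancellation of the $\omega(\tau)$ terms via the identity \eqref{tr}) combined with uniqueness of the shifted mild equation, which is exactly what the authors do, merely packaged as a direct manipulation of $\phi(t+\tau,\omega,\xi)(s)$ with explicit sign case analysis rather than via the auxiliary solution $v(s)=u(s+\tau)$. Your extra care about measurable gluing at the stopping times and the iterated contraction bound for continuity in $\xi$ goes beyond what the paper writes down but does not change the argument.
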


\begin{proof}

In order to prove that $\phi$ is a  cocycle it is of great importance to have used (\ref{eq2}), since when we try to use directly the It\^o stochastic integral we know that exceptional sets depending on the initial condition may appear, which is in contradiction with the cocycle property.

We should have to distinguish several cases, but we present here two cases. The first one is when we consider $t,\tau\geq \mu$ so that $t+s, \tau+s\geq 0$, for all $s\in [-\mu,0]$. In that situation
\begin{align}
\label{eq4}
\begin{split}
   \phi &(t+\tau,\omega,\xi)(s)=S(t+s)\bigg(S(\tau)\xi(0)+\int_0^{\tau} S(\tau-r)F(u_r) dr\\
 & +\int_0^\tau AS(\tau-r)G(u_r)\omega(r)dr-\int_0^{\tau}S(\tau-r)K(u_r)\omega(r)dr\bigg)\\
 &+\int_0^{t+s} S(t+s-r)F(u_{\tau+r})dr+G(u_{t+\tau+s})
  \omega(t+\tau+s)\\
  & +\int_0^{t+s} AS(t+s-r)G(u_{\tau+r})\omega(\tau+r)dr\\
  &-\int_0^{t+s}S(t+s-r)K(u_{\tau+r})\omega(\tau+r)dr\\
  =&S(t+s) \phi(\tau,\omega,\xi)(0)-S(t+s)G(u_\tau)\omega(\tau)+G(u_{t+\tau+s})
  \omega(t+\tau+s)\\
  &+\int_0^{t+s} S(t+s-r)F(u_{\tau+r})dr+\int_0^{t+s} AS(t+s-r)G(u_{\tau+r})\omega(\tau+r)dr\\
  &-\int_0^{t+s}S(t+s-r)K(u_{\tau+r})\omega(r)dr.
  \end{split}
  \end{align}

Notice that
\begin{align*}
\int_0^{t+s} AS(t+s-r)G(u_{\tau+r})\omega(\tau+r)dr&=\int_0^{t+s} AS(t+s-r)G(u_{\tau+r})\theta_\tau \omega(r)dr\\
&+\int_0^{t+s} AS(t+s-r)G(u_{\tau+r})\omega(\tau)dr,\\
\int_0^{t+s}S(t+s-r)K(u_{\tau+r})\omega(\tau+r)dr&=\int_0^{t+s} S(t+s-r)K(u_{\tau+r})\theta_\tau \omega(r)dr\\
&+\int_0^{t+s} S(t+s-r)K(u_{\tau+r})\omega(\tau)dr.
\end{align*}
In addition, similar to (\ref{tr}),
\begin{align*}
&\int_0^{t+s} AS(t+s-r)G(u_{\tau+r})\omega(\tau)dr-\int_0^{t+s} S(t+s-r)K(u_{\tau+r})\omega(\tau)dr\\
=&-G(u_{t+s+\tau})\omega(\tau)+S(t+s)G(u_\tau)\omega(\tau).
\end{align*}

Hence we can rewrite \eqref{eq4} as
\begin{align*}
\begin{split}
  \phi &(t+\tau,\omega,\xi)(s)=S(t+s) \phi(\tau,\omega,\xi)(0) -S(t+s)G(u_\tau)\omega(\tau)\\
  &+\int_0^{t+s} S(t+s-r)F(u_{\tau+r})dr+\int_0^{t+s} AS(t+s-r)G(u_{\tau+r})\theta_\tau \omega(r)dr
  \\&-\int_0^{t+s} S(t+s-r)K(u_{\tau+r})\theta_\tau \omega(r)dr+G(u_{t+\tau+s})
  \omega(t+\tau+s)\\&+S(t+s)G(u_\tau)\omega(\tau)-G(u_{t+s+\tau})\omega(\tau)\\
  &=S(t+s) \phi(\tau,\omega,\xi)(0) +\int_0^{t+s} S(t+s-r)F(u_{\tau+r})dr\\
  &+\int_0^{t+s} AS(t+s-r)G(u_{\tau+r})\theta_\tau \omega(r)dr\\
      &-\int_0^{t+s} S(t+s-r)K(u_{\tau+r})\theta_\tau \omega(r)dr+G(u_{t+\tau+s})\theta_\tau \omega(t+s).
  \end{split}
\end{align*}
Defining the auxiliary function $y_p=u_{\tau+p}$ the previous expression can be rewritten as
\begin{align*}
\begin{split}
  \phi (t+\tau,\omega,\xi)(s)& =S(t+s) y(0) +\int_0^{t+s} S(t+s-r)F(y_{r})dr\\
  &+\int_0^{t+s} AS(t+s-r)G(y_{r})\theta_\tau \omega(r)dr\\
      &-\int_0^{t+s} S(t+s-r)K(y_{r})\theta_\tau \omega(r)dr+G(y_{t+s})\theta_\tau \omega(t+s)\\
      &= \phi(t,\theta_\tau \omega,\phi(\tau,\omega, \xi))(s)
  \end{split}
\end{align*}

which proves the cocycle property in that situation by the uniqueness conclusion. \\

Let us consider now the case in which $t+s+\tau \leq 0$, for $s\in [-\mu,0]$. Then it is straightforward to see that
\begin{align*}
  \phi(t+\tau,\omega,\xi)(s)&=\xi(t+\tau+s)=\phi(\tau,\omega,\xi)(t+s)= \phi(t,\theta_\tau \omega,\phi(\tau,\omega, \xi))(s).
\end{align*}
The rest of cases are left to the reader.
\end{proof}

As we have seen, for the proof of existence of a global solution we need to define a sequence of stopping times having a particular limit behavior, which is analyzed in the following result.

\begin{lemma}\label{stop}
Consider an  $\omega\in\Omega$ where $\Omega$ is defined at the beginning of this section. Suppose that the sequence $(T_i(\omega))_{i\in\NN}$ is defined by \eqref{eq9} such that similar to \eqref{cond}
\begin{equation*}
T(\omega)=\inf\{T>0:  CT+C(T^\beta+T^{1+\beta}+T^{\beta+\nu}) \|\omega\|_{\beta,0,T}\geq 1/2\}
\end{equation*}
for some positive constant $C$. Then for any $t>0$ there exists an $i\in \NN$ such that $T_i(\omega)\ge t$.
\end{lemma}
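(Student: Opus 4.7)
The plan is to proceed by contradiction. Observe that the sequence $(T_i(\omega))_{i\in\NN}$ is strictly increasing, since every increment $\Delta_i := T_i(\omega) - T_{i-1}(\omega) = T_1(\theta_{T_{i-1}(\omega)}\omega)$ is strictly positive (the function $T\mapsto CT+C(T^\beta+T^{1+\beta}+T^{\beta+\nu})\|\cdot\|_{\beta,0,T}$ vanishes at $T=0$). Hence $T_i(\omega)\uparrow T^*(\omega)\in(0,\infty]$; I want to show $T^*(\omega)=\infty$. Suppose instead that $T^*(\omega)<\infty$. Then $\Delta_i\to 0$.

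The essential ingredient is a uniform bound on the shifted H\"older seminorms. Since $\omega\in\Omega$, the number $M:=\|\omega\|_{\beta,0,T^*(\omega)+1}$ is finite. For every $i$ large enough that $T_i(\omega)\le T^*(\omega)+1$ and every $h\in(0,1]$ with $T_{i-1}(\omega)+\Delta_i+h\le T^*(\omega)+1$, a direct change of variables gives
\begin{equation*}
\|\theta_{T_{i-1}(\omega)}\omega\|_{\beta,0,\Delta_i+h} \;=\; \|\omega\|_{\beta,T_{i-1}(\omega),T_{i-1}(\omega)+\Delta_i+h}\;\le\;M.
\end{equation*}

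Now I exploit the definition of $T_1$ as an infimum. For every $T>\Delta_i$ one has
\begin{equation*}
CT+C(T^\beta+T^{1+\beta}+T^{\beta+\nu})\,\|\theta_{T_{i-1}(\omega)}\omega\|_{\beta,0,T}\;\ge\;\tfrac12,
\end{equation*}
for otherwise $T\in\{S>0:CS+C(S^\beta+S^{1+\beta}+S^{\beta+\nu})\|\theta_{T_{i-1}(\omega)}\omega\|_{\beta,0,S}<1/2\}$ would contradict $T>T_1(\theta_{T_{i-1}(\omega)}\omega)=\Delta_i$. Specializing $T=\Delta_i+1/n$, applying the uniform bound $M$ in the integrand, and letting $n\to\infty$ (the polynomial prefactors are continuous in $T$, and the H\"older seminorm stays bounded by $M$, so a limsup argument suffices), we obtain
\begin{equation*}
C\Delta_i+C(\Delta_i^\beta+\Delta_i^{1+\beta}+\Delta_i^{\beta+\nu})\,M\;\ge\;\tfrac12
\end{equation*}
for all $i$ large enough.

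Finally, letting $i\to\infty$ forces $\Delta_i\to 0$, so the left-hand side tends to $0$, contradicting $\ge 1/2$. Hence $T^*(\omega)=\infty$, and for every $t>0$ there is an $i\in\NN$ with $T_i(\omega)\ge t$. The only delicate point in this plan is the passage $T\downarrow\Delta_i$ in the H\"older seminorm; I avoid any discussion of right-continuity of $T\mapsto\|\omega\|_{\beta,0,T}$ by instead using the uniform majorant $M$, which is inherited directly from $\omega\in\Omega$ and the fact that we work inside the bounded interval $[0,T^*(\omega)+1]$.
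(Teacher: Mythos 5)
Your proof is correct and rests on exactly the same key observation as the paper's: since $\|\theta_{T_{i-1}(\omega)}\omega\|_{\beta,0,\cdot}$ is dominated by the H\"older seminorm of $\omega$ over a single fixed bounded interval, every increment $T_i(\omega)-T_{i-1}(\omega)$ admits a uniform positive lower bound. The only difference is presentational — you argue by contradiction with a limiting argument at the infimum, while the paper directly exhibits the lower bound $s^\ast$ solving $Cs+C(s^\beta+s^{1+\beta}+s^{\beta+\nu})k=1/2$ and concludes $T_i(\omega)\ge i s^\ast$ — so this is essentially the paper's proof.
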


\begin{proof}
Let $t>0$ be given. For $T(\omega)=T_1(\omega)\ge t$ there is nothing to show. For the another case, let
\begin{equation*}
  k:=\|\omega\|_{\beta,0,t}.
\end{equation*}
Then the (existing and unique) solution $s^\ast$ of the equation
\begin{equation*}
  Cs+C(s^\beta+s^{1+\beta}+s^{\beta+\nu}) k= 1/2
\end{equation*}
is trivially a positive lower bound of $T(\omega)$. Furthermore,
$$\|\theta_{T(\omega)}\omega\|_{\beta,0,t-T(\omega)}\leq \|\omega\|_{\beta, 0,t}= k$$
which implies that $s^\ast\leq T(\theta_{T(\omega)}\omega)$, and therefore $T_2(\omega)\geq 2s^\ast$. Repeating this method it turns out that there exists $i\in \NN$ such that $T_i(\omega) \geq is^\ast>t$.

\end{proof}

\begin{example}\label{example}

Now we present an example of non-linear terms satisfying the previous assumptions for the existence of solution to our delayed system. For $F$ a simple example is given by a Lipschitz function $f: H\to H$ with Lipschitz constant $L_{f}$. For $u\in C_\mu$ we then set $F(u)=f(u(0))$.

For $G$ we have the following example in mind. Let $g:H \to L(U;D((-A)^\nu))$ be a Lipschitz continuous mapping with Lipschitz constant $L_{g,\nu}$. Then $g:H \to L(U;H)$ is also Lipschitz continuous with Lipschitz constant denoted by $L_{g}$. Define for $x\in C_\mu$ the smoothing operator

\begin{equation*}
  G(x)=\frac{1}{\mu}\int_{-\mu}^0g(x(q))dq.
\end{equation*}
This integral can be interpreted as a Bochner-integral, see Yoshida \cite{Yos80} , Chapter V.5. Indeed, the mapping
\begin{equation*}
  [-\mu,0]\ni\tau\mapsto g(x(\tau))\in L(U;D((-A)^\nu)))
\end{equation*}
is uniformly continuous which allows to approximate uniformly  this function by finite valued step-functions such that this integrand is strongly measurable.
In addition $\tau\mapsto \|g(x(\tau))\|_{L(U;D((-A)^\nu))}$ is integrable.
Moreover it is easy to see that $G$ is Lipschitz continuous where the Lipschitz constant can be chosen as the same Lipschitz constant than for $g$. Furthermore, if for $t\geq  0$ we assume that $x_t \in C_\mu$,

\begin{equation*}
  \frac{d}{dt}G(x_t)= \frac{1}{\mu}(g(x_t(0))-g(x_t(-\mu)))=\frac{1}{\mu}(g(x(t))-g(x(t-\mu))),
\end{equation*}

and thus can define on $C_\mu$ the operator
\begin{equation*}
K(x)=\frac{1}{\mu}(g(x(0))-g(x(-\mu)))
\end{equation*}
such that
\begin{align*}
|K(x)-K(y)|&=\frac{1}{\mu}|g(x(0)-g(y(0))+g(y(-\mu))-g(x(-\mu))|\\
&\leq \frac{1}{\mu} L_g (|x(0)-y(0)|+|y(-\mu)-x(-\mu)|)\leq \frac{2}{\mu} L_g \|x-y\|_\mu.
\end{align*}

\end{example}

\section{Existence of random attractors}

In this section we are going to study the long-time behavior of the random dynamical system defined in Theorem \ref{t2}. In particular, we show that this random dynamical system has a random attractor. To deal with random attractors we have to introduce two classes of random variables. Let for the following $(\Omega,\fF,\PP,\theta)$
be an abstract {\em ergodic} metric dynamical system. A random variable $X$ in $\RR$ is called {\em sublinear}  if
\begin{equation*}
  \lim_{t\to\pm\infty}\frac{|X(\theta_t\omega)|}{|t|}=0
\end{equation*}
on some $(\theta_t)_{t\in\RR}$--invariant set $\hat\Omega\in \fF$ of full measure. Suppose that $t\mapsto X(\theta_t\omega)$ is continuous for $\omega\in \hat\Omega$. Then for every $\omega\in\hat\Omega,\,\eps>0$ there exists a $C(\omega,\eps)$ such that
\begin{equation*}
  |X(\theta_t\omega)|\le C(\omega,\eps)+\eps|t|\quad\text{for }t\in\RR.
\end{equation*}

Furthermore, a random variable $X\ge 0$ is called {\em tempered} if $\log^+ X$ is sublinear using the convention that $\log^+0=0$.

\begin{lemma}\label{l1}
Let $(\Omega,\fF,\PP,\theta)$ be an ergodic metric dynamical system. Suppose that $X$ is a random variable such that
\begin{equation*}
  \EE\sup_{t\in [a,b]}|X(\theta_t\omega)|<\infty
\end{equation*}

for some $a<b\in\RR$. Then $X$ is sublinear.

\end{lemma}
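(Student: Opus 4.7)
The plan is to reduce to a discrete-time Birkhoff argument. By stationarity of $\theta$ we may assume without loss of generality that $a=0$ and $b=T$ for some $T>0$: the hypothesis $\EE\sup_{t\in[a,b]}|X(\theta_t\omega)|<\infty$ is equivalent (via the substitution $t\mapsto t+a$ and the fact that $\theta_a$ preserves $\PP$) to $\EE\sup_{s\in[0,T]}|X(\theta_s\omega)|<\infty$ with $T=b-a$. Set
\begin{equation*}
Y(\omega):=\sup_{s\in[0,T]}|X(\theta_s\omega)|,
\end{equation*}
so $Y\in L^1(\Omega,\fF,\PP)$.

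Now I would apply Birkhoff's pointwise ergodic theorem to the measure-preserving (and, by ergodicity of $\theta$, still ergodic up to a suitable treatment, though ergodicity is not actually needed here) discrete transformation $\theta_T$. This yields a $\theta_T$-invariant set $\Omega_+\in\fF$ of full measure on which
\begin{equation*}
\frac{1}{N}\sum_{n=0}^{N-1}Y(\theta_{nT}\omega)\longrightarrow\EE Y<\infty.
\end{equation*}
The convergence of the Ces\`aro average forces the individual terms to be $o(n)$: writing $S_N(\omega)=\sum_{n=0}^{N-1}Y(\theta_{nT}\omega)$, the identity $Y(\theta_{NT}\omega)=S_{N+1}(\omega)-S_N(\omega)$ together with $S_N/N\to\EE Y$ gives $Y(\theta_{NT}\omega)/N\to 0$ on $\Omega_+$. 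Applying the same argument to $\theta_{-T}$ produces a set $\Omega_-$ on which $Y(\theta_{-NT}\omega)/N\to 0$.

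To pass from integer multiples of $T$ to arbitrary $t\in\RR$, I would use the supremum structure of $Y$. For $t\ge 0$ large, write $t=nT+s$ with $n=\lfloor t/T\rfloor$ and $s\in[0,T)$; then $\theta_t\omega=\theta_s\theta_{nT}\omega$ and by the very definition of $Y$,
\begin{equation*}
|X(\theta_t\omega)|=|X(\theta_s\theta_{nT}\omega)|\le Y(\theta_{nT}\omega),
\end{equation*}
so that
\begin{equation*}
\frac{|X(\theta_t\omega)|}{t}\le\frac{Y(\theta_{nT}\omega)}{nT}\cdot\frac{nT}{t}\le\frac{Y(\theta_{nT}\omega)}{nT}\longrightarrow 0
\end{equation*}
as $t\to+\infty$. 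The case $t\to-\infty$ is handled identically with $n=\lceil|t|/T\rceil$ and the bound $|X(\theta_t\omega)|\le Y(\theta_{-nT}\omega)$, using $\Omega_-$.

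Finally I would take $\hat\Omega:=\bigcap_{\tau\in\QQ}\theta_\tau^{-1}(\Omega_+\cap\Omega_-)$, which is a $(\theta_t)_{t\in\RR}$-invariant set of full measure (it is invariant under all rational shifts by construction, and the sublinearity property is clearly invariant under every real shift since $|t+\tau|/|t|\to 1$), and conclude that $X$ is sublinear on $\hat\Omega$. The main (and only nontrivial) point is ensuring the continuous-time growth is controlled by the discrete-time subsequence; this is precisely what the supremum in the definition of $Y$ provides, so the argument is essentially routine once $Y$ is seen to be integrable.
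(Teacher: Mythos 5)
Your proof is correct and is essentially the canonical argument: the paper gives no proof of this lemma, deferring instead to Arnold, Proposition 4.1.3, whose proof is exactly your combination of Birkhoff's ergodic theorem for the time-$T$ map (ergodicity indeed not being needed), the Ces\`aro-difference trick $Y(\theta_{NT}\omega)=S_{N+1}(\omega)-S_N(\omega)$, and the interpolation to continuous time supplied by the supremum in the definition of $Y$. The only point worth tightening is the final invariant set: as written, $\bigcap_{\tau\in\QQ}\theta_\tau^{-1}(\Omega_+\cap\Omega_-)$ is guaranteed invariant only under rational shifts, but this is easily repaired, e.g.\ by taking $\hat\Omega=\{\omega:\theta_s\omega\in\Omega_+\cap\Omega_-\text{ for Lebesgue-a.e.\ }s\in\RR\}$, which is measurable by Fubini, of full measure, and strictly $(\theta_t)_{t\in\RR}$-invariant, and on which your interpolation bound still applies since the translated intervals $[nT+s,(n+1)T+s]$ cover $\RR$.
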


For the proof of these  statements we refer Arnold \cite{Arn98} Proposition 4.1.3.\\

Consider again the metric dynamical system introduced in Section 2 given by the canonical Brownian motion with the Wiener shift given by (\ref{shift}). Since in particular the covariance operator $Q$ related to the Wiener measure $\PP$ is of trace class, we can establish the following result:
\begin{lemma}\label{l3}
There exists a $(\theta_t)_{t\in\RR}$--invariant set $\hat\Omega\in\fF$ of full measure such that for any $a<0$
\begin{equation*}
 X(\omega) = \sup_{a\le s\le 0}|\omega(s)|_U
\end{equation*}
is sublinear.
\end{lemma}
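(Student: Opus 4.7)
The plan is to invoke Lemma \ref{l1} directly with the random variable $X$ itself: once the integrability condition $\EE\sup_{t\in[\alpha,\beta]}|X(\theta_t\omega)|<\infty$ is verified for some $\alpha<\beta$, Lemma \ref{l1} immediately yields a $(\theta_t)_{t\in\RR}$-invariant set $\hat\Omega\in\fF$ of full measure on which $X$ is sublinear. Measurability of $X$ is clear from the continuity of $\omega$, since the supremum may then be taken over a countable dense subset of $[a,0]$.

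For the integrability estimate I would use the definition \eqref{shift} of the Wiener shift to write $X(\theta_t\omega)=\sup_{a\le s\le 0}|\omega(t+s)-\omega(t)|_U$ and bound this above, via the triangle inequality, by $2\sup_{u\in[t+a,t]}|\omega(u)|_U$. Taking $[\alpha,\beta]=[0,1]$, this gives
\[
\sup_{t\in[0,1]} X(\theta_t\omega)\le 2\sup_{u\in[a,1]}|\omega(u)|_U.
\]
Since $Q$ is trace class, $|\omega(\cdot)|_U$ is a continuous non-negative submartingale with respect to the natural completed filtration $(\bar\fF_t)_{t\ge 0}$, and $\EE|\omega(T)|_U^2=T\,\mathrm{tr}(Q)$. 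Splitting $[a,1]=[a,0]\cup[0,1]$ and applying Doob's $L^2$ maximal inequality to each half (on $[a,0]$ one uses that $t\mapsto\omega(-t)$ is again a $Q$-Wiener process because of the two-sided structure of the canonical Brownian motion), I would obtain $\EE\sup_{u\in[a,1]}|\omega(u)|_U^2 \le 4(1+|a|)\,\mathrm{tr}(Q)<\infty$. Cauchy--Schwarz then yields $\EE\sup_{t\in[0,1]} X(\theta_t\omega)<\infty$, and Lemma \ref{l1} finishes the proof.

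The only delicate point is the correct handling of the two-sided Wiener process and the application of Doob's inequality to the $U$-valued process; both reduce to standard facts once one observes that $|\omega|_U$ is a continuous submartingale whose second moment grows linearly in time with slope $\mathrm{tr}(Q)$. The trace-class assumption on $Q$ is essential here to keep this second moment finite, and the replacement of the interval $[a,0]$ in the definition of $X$ by the enlarged interval $[a,1]$ after the shift is what transforms the seemingly $t$-dependent quantity $X(\theta_t\omega)$ into a supremum of a fixed Wiener path, enabling the moment bound to be uniform in $t\in[0,1]$.
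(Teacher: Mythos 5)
Your proposal follows the same overall strategy as the paper: bound $\sup_t X(\theta_t\omega)$ over a compact window of shift parameters by a fixed supremum of the Wiener path, check that this has finite expectation, and then invoke Lemma \ref{l1}. The one substantive difference is how the moment bound is obtained. The paper simply quotes Kunita's Theorem 1.4.1 to get $\EE\sup_{2a\le r\le 0}|\omega(r)|_U<\infty$ and uses the shift window $[a,0]$, giving the bound $\sup_{a\le t\le 0}X(\theta_t\omega)\le 2\sup_{2a\le r\le 0}|\omega(r)|_U$; you use the window $[0,1]$ and derive the integrability yourself from Doob's $L^2$ maximal inequality applied to the continuous submartingale $|\omega(\cdot)|_U$, together with the two-sided structure to handle $[a,0]$. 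Your route is self-contained and the submartingale/Doob argument is correct (the trace-class assumption enters exactly through $\EE|\omega(T)|_U^2=T\,\mathrm{tr}(Q)$), so this is a legitimate, slightly more elementary replacement for the citation.

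There is, however, one point your write-up does not address and the paper does: the statement requires a \emph{single} invariant full-measure set $\hat\Omega$ that works for \emph{every} $a<0$, whereas Lemma \ref{l1} applied to $X=X_a$ produces a set that a priori depends on $a$. This is why the paper appends the telescoping estimate
\begin{equation*}
  \sup_{2a\le r\le 0}|\theta_t\omega(r)|_U\le \sup_{a\le r\le 0}|\theta_t\omega(r)|_U+\sup_{a\le r\le 0}|\theta_{t+a}\omega(r)|_U+|\theta_t\omega(a)|_U,
\end{equation*}
which upgrades sublinearity on the set obtained for one fixed $a$ to sublinearity for $[na,0]$, $n\in\NN$, on that same set. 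Your argument can be repaired just as easily by intersecting the invariant full-measure sets over $a=-n$, $n\in\NN$ (still invariant and of full measure), and using the monotonicity $X_a\le X_{a'}$ for $a'\le a<0$; but as written the uniformity of $\hat\Omega$ in $a$ is a genuine, if small, gap that you should close explicitly.
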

\begin{proof}Since $s\mapsto \omega(s)$ is continuous, the mapping $X$ is well defined and a random variable.
Since $\omega$ has trace class covariance we know from Kunita \cite{Kun91} Theorem 1.4.1 that for any $a<0$ there exists a $c>0$ such that
\begin{equation*}
  \EE\sup_{2a\le r\le 0}|\omega(r)|_U\le c<\infty.
\end{equation*}
Hence
\begin{equation*}
\sup_{a\le t\le 0}\sup_{a\leq r\leq 0}|\theta_t\omega(r)|_U = \sup_{a\le t\le 0}\sup_{a\le r\le 0}|\omega(t+r)-\omega(t)|_U\le2\sup_{2a\le r\le 0}|\omega(r)|_U
\end{equation*}
where the right hand side has a finite expectation. Hence it suffices to apply Lemma \ref{l1}. 

Indeed, for those $\omega$ for which we have sublinear growth of $\sup_{r\in [a,0]}|\theta_t\omega(r)|$, we also have
\begin{equation*}
  \sup_{2a \leq r \leq 0}|\theta_t\omega(r)|\le \sup_{a\leq r \leq 0}|\theta_t\omega(r)|+\sup_{a \leq r \leq 0}|\theta_{t+a}\omega(r)|+|\theta_t\omega(a)|
\end{equation*}
showing the sublinear growth of the left hand side, and the same property is satisfied if we replace $[-2a,0]$ by $[-n a,0]$, $n\in \NN$.
\end{proof}

In what follows we restrict once more our random dynamical system to $\hat\Omega$ defined in Lemma \ref{l3}, and we denote this restricted metric dynamical system again by $(\Omega,\fF,\PP,\theta)$. We will always assume that $\omega$ is contained in this set $\Omega$.\\

For our purpose in this section let us introduce a  tempered  set consisting of a family of sets $D=(D(\omega))_{\omega\in\Omega}$ such that $D(\omega)\subset C_\mu$ is nonempty, closed and

\begin{equation*}
  \Omega\ni\omega\mapsto \inf_{x\in D(\omega)}\|x-y\|_\mu
\end{equation*}
is a random variable for all $y\in C_\mu$, and in addition, that
\begin{equation*}
  \Omega\in\omega\mapsto \sup_{x\in D(\omega)}\|x\|_\mu
\end{equation*}

is tempered. In particular, we denote by $\dD$ the subset of all tempered sets such that the convergence relation

\begin{equation*}
  \lim_{t\to\pm\infty}\frac{\log^+ \sup_{x\in D(\theta_t\omega)}\|x\|_\mu}{|t|}=0
\end{equation*}
for {\em all} $\omega\in\Omega$.
\begin{definition}\label{d1}
A random set $A\in\dD$ such that $A(\omega)$ is compact is called a random attractor for the random dynamical system $\phi$ given in Theorem \ref{t2} if the invariance property
\begin{equation}
  \phi(t,\omega,A(\omega))=A(\theta_t\omega),\quad\text{for all }\omega\in\Omega,\;t\ge 0,
\end{equation}
and the pullback attracting property
\begin{equation*}
  \lim_{t\to\infty}{\rm dist}_{C_\mu}(\phi(t,\theta_{-t}\omega,D(\theta_{-t}\omega)),A(\omega))=0,\quad\text{for all }D\in \dD,\;\omega\in\Omega
\end{equation*}
hold true, where $dist_{C_\mu}$ denotes de Hausdorff semidistance in $C_\mu$.
\end{definition}

The following conditions ensure the existence of a random attractor, see Flandoli and  Schmalfu{\ss}\cite{FlanSch}, {Schmalfu{\ss} \cite{Sch92}, Schmalfu{\ss} \cite{Sch00}.

\begin{theorem}\label{t1-3}
Let $\phi$ be a continuous random dynamical system. Suppose that $\phi$ has a pullback $\dD$--absorbing set $C\in \dD$, that is, for any $D\in\dD$ and $\omega\in\Omega$ there exists a $t_0=t_D(\omega)$ such that
\begin{equation}
  \phi(t,\theta_{-t}\omega,D(\theta_{-t}\omega))\subset C(\omega)\quad\text{for all }\,t\geq t_0.
\end{equation}
In addition, suppose that $C(\omega)$ is compact.
Then the random dynamical system $\phi$ has a random attractor which is unique in $\dD$, given by
$$A(\omega) := \bigcap_{s\geq 0} \bigcup_{t\geq s}
\overline{\phi(t, \theta_{-t}\omega, C(\theta_{-t}\omega))}^{C_\mu}.$$
\end{theorem}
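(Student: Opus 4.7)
The plan is to follow the now-standard pullback attractor construction (omega-limit of the absorbing set) and to show that the candidate
\[
A(\omega)=\bigcap_{s\ge 0}\overline{\bigcup_{t\ge s}\phi(t,\theta_{-t}\omega,C(\theta_{-t}\omega))}^{C_\mu}
\]
is a random attractor in $\dD$. The input is: (i) continuity of $\phi(t,\omega,\cdot)$ in the initial datum (proved in Theorem \ref{t2}), (ii) the cocycle property, and (iii) the absorbing property together with the compactness of $C(\omega)$.

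First I would check that $A(\omega)$ is nonempty and compact. Pick any sequence $t_n\to\infty$ and any $x_n\in\phi(t_n,\theta_{-t_n}\omega,C(\theta_{-t_n}\omega))$. Using the absorption property one has, for $n$ large, $t_n\ge t_C(\omega)$ with $C(\theta_{-t_n}\omega)\in\dD$ (since $C$ itself is in $\dD$), so after applying the cocycle $\phi(t_n,\theta_{-t_n}\omega,\cdot)=\phi(t_n-t_C,\theta_{-(t_n-t_C)}\omega,\cdot)\circ\phi(t_C,\theta_{-t_n}\omega,\cdot)$ the points $x_n$ lie in $C(\omega)$; compactness of $C(\omega)$ yields a convergent subsequence, and the standard characterisation of omega-limit sets gives both nonemptyness and compactness of $A(\omega)$, as well as $A(\omega)\subset C(\omega)$.

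Next I would prove the pullback attracting property. Suppose by contradiction that for some $D\in\dD$ there is $\delta>0$ and $t_n\to\infty$ with $\mathrm{dist}_{C_\mu}(\phi(t_n,\theta_{-t_n}\omega,D(\theta_{-t_n}\omega)),A(\omega))\ge\delta$. Pick $y_n\in\phi(t_n,\theta_{-t_n}\omega,D(\theta_{-t_n}\omega))$ realising (up to $\delta/2$) this distance. Using the cocycle identity and the absorbing property applied to $D$ at negative time $-t_n$ (so that $\phi(t_n-t_D,\theta_{-t_n+t_D}\omega,\cdot)$ moves $D(\theta_{-t_n}\omega)$ inside $C(\theta_{-(t_n-t_D)}\omega)$ once $t_n-t_D$ is large enough), the residual trajectory $y_n$ can be written as $\phi(t_D,\theta_{-t_D}\omega,z_n)$ with $z_n\in\phi(t_n-t_D,\theta_{-(t_n-t_D)}\omega,C(\theta_{-(t_n-t_D)}\omega))$. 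By the previous paragraph we may extract a convergent subsequence $z_n\to z\in A(\omega)$, and by the cocycle property together with the invariance established below, the limit lies in $A(\omega)$, contradicting the $\delta$-separation.

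Invariance $\phi(t,\omega,A(\omega))=A(\theta_t\omega)$ is then the core technical step. The inclusion $\phi(t,\omega,A(\omega))\subset A(\theta_t\omega)$ uses continuity of $\phi(t,\omega,\cdot)$: if $x=\lim_n\phi(t_n,\theta_{-t_n}\omega,c_n)$ with $c_n\in C(\theta_{-t_n}\omega)$, then $\phi(t,\omega,x)=\lim_n\phi(t+t_n,\theta_{-t_n}\omega,c_n)=\lim_n\phi(t+t_n,\theta_{-(t+t_n)}(\theta_t\omega),c_n)$, which lies in $A(\theta_t\omega)$. The reverse inclusion is the harder direction: given $y\in A(\theta_t\omega)$ one writes $y=\lim_n\phi(t_n,\theta_{-t_n}(\theta_t\omega),c_n)$ with $c_n\in C(\theta_{-t_n+t}\omega)$, splits $\phi(t_n,\theta_{-t_n}(\theta_t\omega),c_n)=\phi(t,\omega,\phi(t_n-t,\theta_{-(t_n-t)}\omega,c_n))$, extracts a limit $x$ of the inner sequence (which sits in $A(\omega)$ by definition), and uses continuity of $\phi(t,\omega,\cdot)$ to identify $y=\phi(t,\omega,x)$. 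I expect this to be the main obstacle, because it requires the precompactness argument of the first paragraph applied to a shifted absorbing family, and one must be careful with the $\theta$-shift bookkeeping.

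Finally, I would verify that $A\in\dD$ (since $A(\omega)\subset C(\omega)$ and $C\in\dD$, this is automatic from the temperedness of $C$), check measurability of $A$ by showing the map $\omega\mapsto\mathrm{dist}_{C_\mu}(y,A(\omega))$ is measurable for every $y\in C_\mu$ (standard: expressing $A$ as a countable intersection of unions of continuous images of $C$, using separability of $C_\mu$ and the continuity of $\phi$), and conclude uniqueness by the usual mutual attraction argument: if $A'\in\dD$ were another attractor, then $A'\subset A$ and $A\subset A'$ by applying the pullback attracting property of each to the other and invoking invariance and compactness.
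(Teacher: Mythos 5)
The paper does not actually prove this theorem: it is quoted as a known result with pointers to Flandoli--Schmalfu{\ss} \cite{FlanSch} and Schmalfu{\ss} \cite{Sch92,Sch00}, so there is no internal proof to compare against. Your reconstruction is the standard omega-limit-set argument from precisely those references, and in outline it is sound: nonemptiness and compactness of $A(\omega)$ from self-absorption of the compact set $C$, invariance from the cocycle property plus continuity of $\phi(t,\omega,\cdot)$, attraction from absorption plus compactness, $A\in\dD$ because $A(\omega)\subset C(\omega)$, and uniqueness by mutual attraction. Two points in your write-up need repair. First, the $\theta$-shift bookkeeping in the attraction step is wrong: applying the absorption property in the fiber $\theta_{-t_D}\omega$ with elapsed time $t_n-t_D$ gives $\phi(t_n-t_D,\theta_{-t_n}\omega,D(\theta_{-t_n}\omega))\subset C(\theta_{-t_D}\omega)$, not $C(\theta_{-(t_n-t_D)}\omega)$, and the limit $z$ of the resulting $z_n$ then lies in $A(\theta_{-t_D}\omega)$, not in $A(\omega)$; you must then invoke invariance to get $\phi(t_D,\theta_{-t_D}\omega,z)\in A(\omega)$, so the forward reference to invariance is essential, not cosmetic. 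A cleaner route that avoids this circularity is to set $K_s(\omega):=\overline{\bigcup_{t\ge s}\phi(t,\theta_{-t}\omega,C(\theta_{-t}\omega))}^{C_\mu}$, note that for $s\ge t_C(\omega)$ these form a decreasing family of nonempty compacta with intersection $A(\omega)$, hence ${\rm dist}_{C_\mu}(K_s(\omega),A(\omega))\to 0$ as $s\to\infty$, while the cocycle property and absorption give $\phi(t,\theta_{-t}\omega,D(\theta_{-t}\omega))\subset\phi(s,\theta_{-s}\omega,C(\theta_{-s}\omega))\subset K_s(\omega)$ whenever $t\ge s+t_D(\theta_{-s}\omega)$. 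Second, you silently replaced the set displayed in the theorem, $\bigcap_{s\ge 0}\bigcup_{t\ge s}\overline{\phi(t,\theta_{-t}\omega,C(\theta_{-t}\omega))}^{C_\mu}$, by $\bigcap_{s\ge 0}\overline{\bigcup_{t\ge s}\phi(t,\theta_{-t}\omega,C(\theta_{-t}\omega))}^{C_\mu}$; the latter is the correct omega-limit set and the one your argument constructs (the former, with the closure inside the uncountable union, is best read as a typo in the statement), so this substitution is right but deserves an explicit remark.
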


Later on we will need to apply the Gronwall lemma. The following version can be found in \cite{gronwall}, Theorem 2.1.

\begin{lemma} \label{gronwall} Let $T>0$ and $u$, $\alpha$, $f$ and $g$ be non-negative continuous functions defined on $[0,T]$ such that
$$u(t)\leq  \alpha(t) + f(t)\int_0^t g(r) u(r) dr, \quad \text{ for } t\in [0,T].$$
Then
$$u(t)\leq  \alpha(t) + f(t)\int_0^t g(r)\alpha(r) e^{\int_r^t f(\tau) g(\tau) d\tau} ds, \quad \text{ for } t\in [0,T].$$
\end{lemma}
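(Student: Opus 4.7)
The plan is the standard integrating-factor argument for Gronwall-type inequalities. First I would introduce the auxiliary quantity $v(t):=\int_0^t g(r)u(r)\,dr$, which is $C^1$ on $[0,T]$ (since $g$ and $u$ are continuous) with $v(0)=0$. Multiplying the hypothesis $u(t)\le \alpha(t)+f(t)v(t)$ by the non-negative factor $g(t)$ converts the integral inequality into the pointwise differential inequality
\[
v'(t)=g(t)u(t)\le g(t)\alpha(t)+f(t)g(t)v(t),\qquad t\in[0,T].
\]

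Next I would set $F(t):=\int_0^t f(\tau)g(\tau)\,d\tau$ and use $e^{-F(t)}$ as an integrating factor: a one-line computation gives
\[
\frac{d}{dt}\bigl(v(t)e^{-F(t)}\bigr)=\bigl(v'(t)-f(t)g(t)v(t)\bigr)e^{-F(t)}\le g(t)\alpha(t)e^{-F(t)}.
\]
Integrating from $0$ to $t$ and using $v(0)=0$ yields
\[
v(t)\le \int_0^t g(r)\alpha(r)e^{\int_r^t f(\tau)g(\tau)\,d\tau}\,dr,
\]
and substituting this estimate back into the original hypothesis $u(t)\le \alpha(t)+f(t)v(t)$ delivers the stated conclusion.

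There is no real obstacle: the continuity of $u$, $\alpha$, $f$, $g$ is exactly what is needed to guarantee that $v$ and $F$ are $C^1$ (note that $f$ itself need not be differentiable, only $F$, which is the integral of a continuous function). The non-negativity of $g$ is used to keep the inequality in the correct direction when passing from $u(t)\le \alpha(t)+f(t)v(t)$ to $v'(t)\le g(t)\alpha(t)+f(t)g(t)v(t)$; the non-negativity of $f$ and $\alpha$ ensures the final expression makes sense but is not otherwise essential to the derivation. The argument is robust enough that it would extend, with the same steps, to merely locally integrable data, but the continuous setting stated here is both simpler and sufficient for the applications later in the paper.
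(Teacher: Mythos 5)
Your proof is correct. The paper itself does not prove this lemma --- it simply cites it as Theorem 2.1 of Beesack's lecture notes \cite{gronwall} --- and your integrating-factor argument (setting $v(t)=\int_0^t g(r)u(r)\,dr$, multiplying the hypothesis by $g(t)$, and integrating $\frac{d}{dt}\bigl(v(t)e^{-F(t)}\bigr)$) is exactly the standard derivation of this generalized Gronwall inequality, so there is nothing to reconcile. The only blemish is inherited from the statement itself: the final integral should end in $dr$, not $ds$.
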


In what follows, we consider extra assumptions to simplify the situation. Let us assume that $G$ and $K$ are bounded by $C_{G,\nu}$ and $C_K$, respectively, that is,
\begin{equation}\label{G4}
\|G(x)\|_{L(U;D((-A)^\nu))}\leq C_{G,\nu},\qquad\text{for } x\in C_\mu,
\end{equation}
\begin{equation}\label{K}
\|K(x)\|_{L(U;H)}\leq C_{K},\qquad\text{for } x\in C_\mu.
\end{equation}
By $C_G$ we will denote the bound of of $G(x)$ in $L(U;H)$.
Note that from (\ref{eq3}) we know that $F$ in particular is linearly bounded, i.e., there exist $C_F$ and $\bar C_F$ such that
\begin{align}\label{F2}
|F(x)|\leq \bar C_F +C_F \|x\|_\mu,\quad \forall x\in C_\mu.
\end{align}

Now in a first step we establish the existence of an absorbing set. Later on, we will see that this absorbing set is also an element of the family $\dD$.

\begin{lemma}\label{absorbing}

Let $\dD=\{D(\omega)\}_{\omega \in \Omega}$ be the family of tempered sets in $C_\mu$. Under the assumptions (\ref{G4}), (\ref{K}) and (\ref{F2}), if in addition
\begin{align}\label{mu}
\mu <\frac{1}{\lambda} \log\bigg(\frac{\lambda}{C_F}\bigg)
\end{align}
then the ball in $C_\mu$ given by $B(\omega):=B(0,\rho(\omega))$, with $\rho(\omega)=R(\omega)+\delta$, is a $\dD$-absorbing set, where $R$ is given in \eqref{eq100} and $\delta$ is some positive constant.
\end{lemma}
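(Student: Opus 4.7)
The plan is to derive a pathwise a priori bound on $|u(t)|$ from the mild representation (\ref{eq?}), upgrade it to a bound on $\|u_t\|_\mu$ by a delayed Gronwall argument whose threshold is precisely (\ref{mu}), and then pass to the pullback limit $t\to\infty$ with $\omega$ replaced by $\theta_{-t}\omega$ to identify a tempered absorbing ball. For the pointwise estimate, I apply the semigroup bounds (\ref{eq5}) term by term to (\ref{eq?}): the drift and the $F$-integral are controlled, via (\ref{F2}), by $e^{-\lambda t}\|\xi\|_\mu+\bar C_F/\lambda+C_F\int_0^t e^{-\lambda(t-r)}\|u_r\|_\mu\,dr$; the instantaneous noise term satisfies $|G(u_t)\omega(t)|\le C_G|\omega(t)|_U$ by (\ref{G4}); the singular integral is handled by factoring $AS(t-r)G(u_r)=(-A)^{1-\nu}S(t-r)(-A)^\nu G(u_r)$ and combining (\ref{eq5}) with $\gamma=1-\nu\in(0,1)$ and (\ref{G4}), yielding $C_{G,\nu}\int_0^t (t-r)^{\nu-1}e^{-\lambda(t-r)}|\omega(r)|_U\,dr$; and the $K$-integral is bounded by $C_K\int_0^t e^{-\lambda(t-r)}|\omega(r)|_U\,dr$ via (\ref{K}). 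Collecting the noise pieces into $N(t,\omega)$,
\begin{equation*}
|u(t)|\le e^{-\lambda t}\|\xi\|_\mu+\frac{\bar C_F}{\lambda}+C_F\int_0^t e^{-\lambda(t-r)}\|u_r\|_\mu\,dr+N(t,\omega).
\end{equation*}

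Setting $m(t):=\|u_t\|_\mu$, using $\|u_r\|_\mu=m(r)$ for $r\ge 0$, and taking the sup over $s\in[t-\mu,t]$ of the above inequality, one obtains, for $t\ge\mu$,
\begin{equation*}
m(t)\le e^{-\lambda(t-\mu)}\|\xi\|_\mu+\frac{\bar C_F}{\lambda}+C_Fe^{\lambda\mu}\int_0^t e^{-\lambda(t-r)}m(r)\,dr+N^{\ast}(t,\omega),
\end{equation*}
where $N^\ast(t,\omega):=\sup_{s\in[t-\mu,t]}N(s,\omega)$. Multiplying by $e^{\lambda t}$, introducing $\tilde m(t):=e^{\lambda t}m(t)$, applying Lemma \ref{gronwall} with $f\equiv 1$ and $g\equiv C_Fe^{\lambda\mu}$, and dividing back by $e^{\lambda t}$, I obtain an estimate of the form
\begin{equation*}
m(t)\le c_1 e^{-(\lambda-C_Fe^{\lambda\mu})t}\|\xi\|_\mu+c_2+C_Fe^{\lambda\mu}\int_0^t e^{-(\lambda-C_Fe^{\lambda\mu})(t-r)}N^\ast(r,\omega)\,dr
\end{equation*}
with constants $c_1,c_2$ depending only on $\lambda,\mu,C_F,\bar C_F$. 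Condition (\ref{mu}), equivalent to $C_Fe^{\lambda\mu}<\lambda$, is precisely what makes the effective rate $\lambda-C_Fe^{\lambda\mu}$ strictly positive, so the initial datum contribution decays and the noise part becomes a convolution against an exponentially decaying kernel.

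Substituting $\omega\mapsto\theta_{-t}\omega$, for $\xi\in D(\theta_{-t}\omega)$ with $D\in\dD$ the quantity $\sup_{\xi\in D(\theta_{-t}\omega)}\|\xi\|_\mu$ grows sub-exponentially, so its product with $e^{-(\lambda-C_Fe^{\lambda\mu})t}$ vanishes as $t\to\infty$. The change of variables $r\mapsto r-t$ together with (\ref{shift}) rewrites the noise convolution as $C_Fe^{\lambda\mu}\int_{-t}^0 e^{(\lambda-C_Fe^{\lambda\mu})r}N^\ast(r+t,\theta_{-t}\omega)\,dr$, whose integrand is built from $|\omega(\cdot)|_U$-type quantities on bounded windows and grows only sublinearly in $|r|$ by Lemma \ref{l3}. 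Hence the integral converges as $t\to\infty$ to a random variable $R(\omega)$, which is exactly the one appearing in (\ref{eq100}), and Lemma \ref{l1} shows that $R$ is sublinear and therefore tempered. Consequently, for every $\delta>0$ there exists $t_D(\omega)$ with $\|\phi(t,\theta_{-t}\omega,\xi)\|_\mu\le R(\omega)+\delta$ for every $t\ge t_D(\omega)$ and every $\xi\in D(\theta_{-t}\omega)$; the temperedness of $R$ implies $B(\omega)=\{x\in C_\mu:\|x\|_\mu\le R(\omega)+\delta\}\in\dD$, as required.

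The principal obstacle is the bookkeeping of the Gronwall step: the noise term $e^{\lambda t}N(t,\omega)$ must be packaged into a non-decreasing majorant in such a way that, after invoking Lemma \ref{gronwall} and dividing back by $e^{\lambda t}$, the noise contribution remains a convolution with a kernel of strictly positive decay rate. The threshold $C_Fe^{\lambda\mu}<\lambda$ encoded in (\ref{mu}) is exactly what prevents the delayed Gronwall exponent from cancelling the semigroup decay, and without it no absorbing set can be extracted. A secondary but important point is the integrability of the singular kernel $(t-r)^{\nu-1}$ arising from the regularised noise integral, which is precisely the purpose of imposing the strong smoothing condition (\ref{G3}).
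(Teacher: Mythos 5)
Your overall strategy --- a pointwise estimate from the mild formula, passage to $\|u_t\|_\mu$, a Gronwall step whose effective decay rate $\lambda-C_Fe^{\lambda\mu}>0$ is exactly condition (\ref{mu}), and then the pullback limit --- coincides with the paper's. There is, however, a genuine gap at the final step, where you substitute $\omega\mapsto\theta_{-t}\omega$ and assert that the resulting noise convolution is ``built from $|\omega(\cdot)|_U$-type quantities on bounded windows'' and converges to the $R(\omega)$ of (\ref{eq100}). By the Wiener shift (\ref{shift}) one has $\theta_{-t}\omega(r)=\omega(r-t)-\omega(-t)$, so every noise factor in your $N^\ast$ carries the anchor value $\omega(-t)$ in addition to values of $\omega$ on bounded windows. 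A triangle-inequality bound then produces a contribution of order $|\omega(-t)|_U\int_0^t e^{-(\lambda-C_Fe^{\lambda\mu})(t-r)}dr\sim |\omega(-t)|_U/(\lambda-C_Fe^{\lambda\mu})$, which does not vanish as $t\to\infty$ (it grows sublinearly in $t$). Hence your convolution does not converge to a $t$-independent random variable, and no absorbing radius can be read off from your estimate as written.

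The paper removes this obstruction \emph{before} estimating. Since $\tfrac{d}{dr}\bigl(S(\tau+s-r)G(u_r)\bigr)=-AS(\tau+s-r)G(u_r)+S(\tau+s-r)K(u_r)$, the identity (\ref{eq6}) shows that the $\omega(-t)$-parts of the boundary term $G(u_{\tau+s})\theta_{-t}\omega(\tau+s)$, of the $AG$-integral and of the $K$-integral telescope, leaving only $-S(\tau+s)G(u_0)\omega(-t)$, which is bounded by $e^{-\lambda(\tau+s)}C_G|\omega(-t)|_U$ and therefore vanishes in the pullback limit $\tau=t\to\infty$ by sublinearity of $|\omega(-t)|_U$. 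Only after this cancellation are the remaining noise factors genuinely of the form $\|\omega_{r-t}\|_\mu$, so that the convolution integrals converge pathwise to the integrals $\int_{-\infty}^0 e^{\lambda r}\|\omega_r\|_\mu\,dr$, etc., appearing in (\ref{eq100}); note also that the sublinear growth of $\|\omega_r\|_\mu$ itself requires the telescoping argument of Lemma \ref{l20}, not Lemma \ref{l3} alone. Without an argument of this kind your proof does not produce a tempered absorbing ball.
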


\begin{proof}
We start by obtaining an a priori estimate of our solution in $C_\mu$. We take $\tau \geq \mu$ and estimate $\|u_\tau\|_\mu=\sup_{s\in[-\mu,0]}|u(\tau+s)|$, assuming that the fiber is given for $\theta_{-t}\omega$ for $t\geq \mu$ (note that when $\tau+s<0$ then $u(\tau+s)=\xi(\tau+s)$ and therefore $|u(\tau+s)| \leq \|\xi\|_\mu$)

Thanks to the definition of $\phi$ given in Theorem \ref{t2}, considered in the fiber $\theta_{-t}\omega$, for $\tau+s\ge 0$ we get 
\begin{align}\label{in}
\begin{split}
\phi &(\tau,\theta_{-t}\omega,\xi)(s)=u(\tau+s)=S(\tau+s)\xi(0)+\int_0^{\tau+s} S(\tau+s-r)F(u_r) dr\\
&+ G(u_{\tau+s})\theta_{-t}\omega(\tau+s)-\int_0^{\tau+s} S(\tau+s-r) K(u_r) \theta_{-t}\omega(r)dr\\
&+\int_0^{\tau+s} S(\tau+s-r) A G(u_r) \theta_{-t}\omega(r) dr.
\end{split}
\end{align}
Furthermore,
\begin{align}\label{eq6}
\begin{split}
&-\int_0^{\tau+s}S(\tau+s-r)AG(u_r)\omega(-t)dr\\
=&\int_0^{\tau+s}\frac{d}{dr}S(\tau+s-r)G(u_r)\omega(-t)dr\\
= & G(u_{\tau+s})\omega(-t)-S(\tau+s)G(u_0)\omega(-t)-\int_0^{\tau+s}S(\tau+s-r)K(u_r)\omega(-t)dr
\end{split}
\end{align}
and then, taking into account (\ref{shift}), using (\ref{eq5}), (\ref{G4}), (\ref{K}), (\ref{F2}) and \eqref{eq6}, from (\ref{in}) we have
\begin{align*}
|u(\tau+s)|&\leq e^{-\lambda(\tau+s)}|\xi(0)|+ C_G |\omega(\tau+s-t)|_U+ e^{-\lambda(\tau+s)} C_G |\omega(-t)|_U\\
&+\int_0^{\tau+s} e^{-\lambda(\tau+s-r)}(\bar C_F+C_F \|u_r\|_\mu) dr\\
&+C_K \int_0^{\tau+s} e^{-\lambda(\tau+s-r)}  |\omega(r-t)|_Udr +C_{G,\nu} \int_0^{\tau+s} \frac{e^{-\lambda(\tau+s-r)}}{(\tau+s-r)^{1-\nu}} | \omega(r-t)|_U dr.
\end{align*}
Taking supremum when $s\in[-\mu,0]$ the previous inequality yields

\begin{align}\label{3.1}
\begin{split}
\|u_\tau\|_\mu&\leq e^{\lambda \mu} e^{-\lambda \tau}\|\xi\|_\mu+ C_G \|\omega_{\tau-t}\|_\mu+ e^{\lambda \mu} e^{-\lambda \tau} C_G |\omega(-t)|_U\\&+e^{\lambda \mu} \int_0^{\tau} e^{-\lambda(\tau-r)}(\bar C_F+C_F \|u_r\|_\mu) dr\\
&+C_K e^{\lambda \mu} \int_0^{\tau} e^{-\lambda(\tau-r)}  |\omega(r-t)|_Udr \\
&+C_{G,\nu}  \sup_{s \in [-\mu, 0]} \int_0^{\tau+s} \frac{e^{-\lambda(\tau+s-r)}}{(\tau+s-r)^{1-\nu}} | \omega(r-t)|_U dr,
\end{split}
\end{align}
where here $\|\omega\|_\mu$ denotes $\sup_{s\in[-\mu,0]}|\omega(s)|_U$. Let us look carefully at the last supremum term:
\begin{align*}
 \sup_{s \in [-\mu, 0]} \int_0^{\tau+s} \frac{e^{-\lambda(\tau+s-r)}}{(\tau+s-r)^{1-\nu}} | \omega(r-t)|_U dr & = \sup_{s \in [-\mu, 0]} \int_{-s}^{\tau} \frac{e^{-\lambda(\tau-r)}}{(\tau-r)^{1-\nu}} | \omega(r+s-t)|_U dr \\& \leq  \int_{0}^{\tau} \frac{e^{-\lambda(\tau-r)}}{(\tau-r)^{1-\nu}} \|\omega_{r-t}\|_\mu dr.
\end{align*}
Coming back to (\ref{3.1}) we obtain
\begin{align*}
\|u_\tau\|_\mu&\leq \alpha(\tau) + C_F e^{\lambda \mu} \int_0^{\tau} e^{-\lambda(\tau-r)}  \|u_r\|_\mu dr
\end{align*}
where by $\alpha(\cdot)$ we denote all terms which do not contain $u$, that is,
\begin{align*}
\begin{split}
\alpha(\tau) &= e^{\lambda \mu} e^{-\lambda \tau}\|\xi\|_\mu+ C_G \|\omega_{\tau-t}\|_\mu+ e^{\lambda \mu} e^{-\lambda \tau} C_G |\omega(-t)|_U+ \bar C_F e^{\lambda \mu} \int_0^{\tau} e^{-\lambda(\tau-r)} dr\\
&+C_K e^{\lambda \mu} \int_0^{\tau} e^{-\lambda(\tau-r)}  |\omega(r-t)|_Udr +C_{G,\nu}  \int_{0}^{\tau} \frac{e^{-\lambda(\tau-r)}}{(\tau-r)^{1-\nu}} \|\omega_{r-t}\|_\mu dr
 \end{split}
\end{align*}
for all $\tau\ge 0$.
Now we apply Lemma \ref{gronwall} to $e^{\lambda \tau} \|u_\tau\|_\mu$ getting
\begin{align*}
e^{\lambda \tau} \|u_\tau\|_\mu&\leq e^{\lambda \tau} \alpha(\tau) + C_F e^{\lambda \mu} \int_0^{\tau} e^{\lambda  r} \alpha(r) e^{C_Fe^{\lambda \mu} (\tau-r)}  dr.
\end{align*}
If in the previous expression we take $\tau=t$, multiplying by $e^{-\lambda t}$ we have

\begin{align}\label{apriori}
 \|u_t\|_\mu&\leq  \alpha(t) + C_F e^{\lambda \mu} \int_0^t e^{-\lambda (t-r)} e^{C_Fe^{\lambda \mu} (t-r)} \alpha(r)   dr.
\end{align}
From this a priori estimate we shall derive the absorbing set. In order to obtain this set, we need to analyze every term of the previous estimate. For the first term on the right hand side of (\ref{apriori}) we get

\begin{align*}
\begin{split}
\alpha(t) &\leq e^{\lambda \mu} e^{-\lambda t}\|\xi\|_\mu+ C_G \|\omega_{0}\|_\mu+ e^{\lambda \mu} e^{-\lambda t} C_G |\omega(-t)|_U+ \bar C_F e^{\lambda \mu} \int_0^{t} e^{-\lambda(t-r)} dr\\
&+C_K e^{\lambda \mu} \int_0^{t} e^{-\lambda(t-r)}  |\omega(r-t)|_Udr +C_{G,\nu}  \int_{0}^{t} \frac{e^{-\lambda(t-r)}}{(t-r)^{1-\nu}} \|\omega_{r-t}\|_\mu dr\\
&\leq e^{\lambda \mu} e^{-\lambda t} (\|\xi\|_\mu+  C_G \|\omega_{-t}\|_\mu)+ C_G \|\omega_{0}\|_\mu+ \frac{\bar C_F e^{\lambda \mu} }{\lambda}  \\
&+C_K e^{\lambda \mu} \int_{-\infty}^0 e^{\lambda r}  \|\omega_r\|_\mu dr +C_{G,\nu}  \int_{-\infty}^{0} \frac{e^{\lambda r}}{(-r)^{1-\nu}} \|\omega_{r}\|_\mu dr.
 \end{split}
\end{align*}
Note that these infinite integrals exist for any $\omega\in\Omega$ which follows by Remark \ref{r20} below.

Now we examine the different terms appearing under the integral in (\ref{apriori}), i.e., the expression
\begin{align}\label{new}
\begin{split}
\int_0^t  &e^{-(\lambda-C_Fe^{\lambda \mu}) (t-r)} \bigg(e^{\lambda \mu} e^{-\lambda r} (\|\xi\|_\mu+  C_G \|\omega_{-t}\|_\mu) +C_G \|\omega_{r-t}\|_\mu\\
&+ \frac{\bar C_F e^{\lambda \mu} }{\lambda} +C_K e^{\lambda \mu} \int_{-r}^0 e^{\lambda \tau}  \|\omega_\tau\|_\mu d\tau +C_{G,\nu}  \int_{-r}^{0} \frac{e^{\lambda \tau}}{(-\tau)^{1-\nu}} \|\omega_{\tau}\|_\mu d\tau \bigg) dr\\
&=:I_1+I_2+I_3+I_4+I_5.
 \end{split}
\end{align}
For the first term in (\ref{new}) we get
\begin{align}\label{t1b}
\begin{split}
I_1 =&e^{\lambda \mu}  (\|\xi\|_\mu+  C_G \|\omega_{-t}\|_\mu) e^{ -(\lambda -C_Fe^{\lambda \mu}) t}  \int_{0}^t e^{- C_Fe^{\lambda \mu} r} dr
\\&\leq  e^{\lambda \mu}  (\|\xi\|_\mu+  C_G \|\omega_{-t}\|_\mu)   \frac{e^{ -(\lambda -C_Fe^{\lambda \mu}) t}}{C_F e^{\lambda \mu}}.
 \end{split}
\end{align}
For $I_2$ and $I_3$ we easily obtain
\begin{align}\label{t2b}
I_2 \le  C_G \int_{-\infty}^0 e^{(\lambda-C_Fe^{\lambda \mu}) r} \|\omega_{r}\|_\mu dr, \qquad \qquad
I_3  \leq  \frac{\bar C_F e^{\lambda \mu} }{\lambda} \frac{1} {\lambda- C_F e^{\lambda \mu}}.
\end{align}
For the last two terms, due to (\ref{mu}) we get
\begin{align}\label{t4}
\begin{split}
& I_4+I_5 \le \int_{-\infty}^0   e^{(\lambda -C_Fe^{\lambda \mu} ) r} \bigg(C_K e^{\lambda \mu} \int_{-\infty }^0 e^{\lambda \tau}  \|\omega_\tau\|_\mu d\tau +C_{G,\nu}  \int_{-\infty }^{0} \frac{e^{\lambda \tau}}{(-\tau)^{1-\nu}} \|\omega_{\tau}\|_\mu d\tau \bigg) dr\\
\le & \frac{1}{\lambda-C_Fe^{\lambda\mu}}\bigg(C_K e^{\lambda \mu} \int_{-\infty}^0 e^{\lambda \tau}  \|\omega_\tau\|_\mu d\tau +C_{G,\nu}  \int_{-\infty }^{0} \frac{e^{\lambda \tau}}{(-\tau)^{1-\nu}} \|\omega_{\tau}\|_\mu d\tau \bigg).
\end{split}
\end{align}

Collecting everything, from (\ref{apriori}) taking into account (\ref{t2b})-(\ref{t4}) we define:
\begin{align}\label{eq100}
\begin{split}
&R(\omega)= C_G \|\omega_{0}\|_\mu+ \frac{\bar C_F e^{\lambda \mu} }{\lambda} \\
&+C_K e^{\lambda \mu} \int_{-\infty}^0 e^{\lambda r}  \|\omega_r\|_\mu dr +C_{G,\nu}  \int_{-\infty}^{0} \frac{e^{\lambda r}}{(-r)^{1-\nu}} \|\omega_{r}\|_\mu dr\\
&+C_F e^{\lambda \mu} \bigg[  C_G \int_{-\infty}^0 e^{(\lambda-C_Fe^{\lambda \mu}) r} \|\omega_{r}\|_\mu dr+\frac{\bar C_F e^{\lambda \mu} }{\lambda(\lambda- C_F e^{\lambda \mu})}\\
&+\frac{1}{\lambda-C_Fe^{\lambda\mu}}\bigg(C_K e^{\lambda \mu} \int_{-\infty}^0 e^{\lambda \tau}  \|\omega_\tau\|_\mu d\tau +C_{G,\nu}  \int_{-\infty }^{0} \frac{e^{\lambda \tau}}{(-\tau)^{1-\nu}} \|\omega_{\tau}\|_\mu d\tau \bigg)\bigg].
\end{split}
\end{align}

There are two terms which have not been considered in the above definition of $R$, the first one coming from the definition of $\alpha$ and the second one coming from the estimation (\ref{t1b}). If now we take $D\in\dD$ and replace in these two terms $\|\xi\|_\mu$ by
$\sup_{\xi\in D(\theta_{-t}\omega)}\|\xi\|_\mu$, choosing some positive $\delta$ then we can define the absorbing time $t_D(\omega)$ for $D$ by
\begin{align*}
t_D(\omega)& =\inf\bigg\{\tilde t\geq 0: \forall t\geq \tilde t, e^{\lambda \mu} e^{-\lambda t} (\sup_{\xi\in D(\theta_{-t}\omega)}\|\xi\|_\mu+  C_G \|\omega_{-t}\|_\mu)\\
\qquad &+ C_F e^{2 \lambda \mu}  (\sup_{\xi\in D(\theta_{-t}\omega)}\|\xi\|_\mu+C_G \|\omega_{-t}\|_\mu)   \frac{e^{ -(\lambda -C_Fe^{\lambda \mu}) t}}{C_F e^{\lambda \mu}}\le \frac{\delta}{2}\bigg\}+1,
\end{align*}
which follows from (\ref{mu}) and the fact that $\lim_{t\to\infty}e^{-\lambda t}\|\omega_{-t}\|_\mu=0$ for $\omega\in\Omega$.

\end{proof}

It is worth pointing out that, due to condition (\ref{mu}), the value of $\lambda$ is restricting the maximal admissible value for the time lag $\mu$. The larger $\lambda$ is, the larger $\mu$ is allowed to be.

\begin{lemma}\label{l20}
The absorbing ball given in Lemma \ref{absorbing} belongs to $\dD$.
\end{lemma}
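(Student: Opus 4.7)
The plan is to verify that $\rho(\omega) = R(\omega) + \delta$ is tempered in the sense required by $\dD$, i.e.\ that $\log^+ \rho(\theta_t\omega)/|t| \to 0$ as $t \to \pm\infty$ for every $\omega \in \Omega$. Since $\log^+(a+b) \le \log^+ a + \log^+ b + \log 2$ and the constant $\delta$ is trivially tempered, it is enough to prove this for $R$. Moreover, since $\log^+ y \le y$ for $y \ge 0$, I would reduce the problem further: it suffices to show that $R$ is sublinear, i.e.\ $R(\theta_t\omega)/|t| \to 0$, because sublinearity forces $\log^+ R(\theta_t\omega) = o(|t|)$.

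Inspecting \eqref{eq100}, I would decompose $R$ as a finite nonnegative linear combination of constants, of $\|\omega_0\|_\mu$, and of three integrals of the form
\[
T_{\kappa,\eta}(\omega) := \int_{-\infty}^0 \frac{e^{\kappa r}}{(-r)^\eta}\,\|\omega_r\|_\mu\, dr,
\]
with $(\kappa,\eta) \in \{(\lambda,0),\,(\lambda,1-\nu),\,(\lambda - C_F e^{\lambda\mu},0)\}$. Both values of $\kappa$ are strictly positive thanks to \eqref{mu}, and $\eta \in [0,1)$. The first task is to check that $T_{\kappa,\eta}(\omega)$ is finite for every $\omega$ in the restricted probability space fixed in Lemma \ref{l3}: applying that lemma to the sequence of windows $[-k,0]$, $k \in \NN$, one obtains a sublinear bound $\|\omega_r\|_\mu \le C(\omega) + |r|$, which multiplied by the decaying weight $e^{\kappa r}(-r)^{-\eta}$ is integrable on $(-\infty, 0]$.

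The heart of the argument is showing that each summand is sublinear under $\theta$. For $\|\omega_0\|_\mu$ this is exactly Lemma \ref{l3}. For $T_{\kappa,\eta}$ I would invoke Lemma \ref{l1}: it suffices to check $\EE \sup_{t \in [0,1]} T_{\kappa,\eta}(\theta_t\omega) < \infty$. Using
\[
\|(\theta_t\omega)_r\|_\mu \le \sup_{s \in [r-\mu, r]} |\omega(s+t)|_U + |\omega(t)|_U,
\]
pulling the supremum over $t\in[0,1]$ inside and applying Fubini, one bounds the expectation by
\[
\int_{-\infty}^0 \frac{e^{\kappa r}}{(-r)^\eta}\, \EE \sup_{s \in [r-\mu, r+1]} |\omega(s)|_U\, dr \;+\; \frac{\Gamma(1-\eta)}{\kappa^{1-\eta}}\,\EE \sup_{t \in [0,1]} |\omega(t)|_U.
\]
Both terms are finite: the trace-class structure of $Q$ combined with stationarity of increments of $W$ gives $\EE\sup_{s\in[r-\mu, r+1]}|\omega(s)|_U \le c\,(1 + \sqrt{|r|})$, which is dominated by the decaying weight. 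Lemma \ref{l1} then yields a $\theta$-invariant full-measure subset of $\Omega$ on which $T_{\kappa,\eta}$ is sublinear; intersecting the three such sets (finite intersection) and restricting $\Omega$ once more, sublinearity holds for every $\omega$. A finite sum of sublinear random variables is sublinear, so $R$ is sublinear, hence tempered. Together with measurability of $\omega \mapsto \inf_{x \in B(\omega)}\|x-y\|_\mu = (\|y\|_\mu - \rho(\omega))^+$, which follows from measurability of $\rho$ (built from integrals and suprema of continuous functionals of $\omega$), this gives $B \in \dD$.

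The main obstacle is commuting $\sup_{t\in[0,1]}$ with the improper integral while keeping the integrand summable; once this is handled by the triangle-inequality splitting above, what remains are classical moment bounds for the $Q$-Brownian motion and routine bookkeeping of finitely many sublinear summands.
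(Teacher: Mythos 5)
Your argument is correct, but it reaches the conclusion by a genuinely different route from the paper. The paper works pathwise on the set $\Omega$ already fixed after Lemma \ref{l3}: it bounds $\|(\theta_{-t}\omega)_\tau\|_\mu\le\|\omega_{-t+\tau}\|_\mu+\|\omega_{-t}\|_\mu$, controls the growth of $r\mapsto\|\omega_{-r}\|_\mu$ by telescoping over windows of length $\mu$ (namely $\|\omega_{-n^*\mu}\|_\mu\le 3\sum_{i=0}^{n^*}\|\theta_{-i\mu}\omega\|_\mu+\|\omega_0\|_\mu$, each summand being sublinear by Lemma \ref{l3}), and then verifies directly that $e^{-2\kappa|t|}R_j(\theta_{\mp t}\omega)\to0$ for every $\kappa>0$, which is precisely temperedness of $R_1,R_2$. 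You instead apply the moment criterion of Lemma \ref{l1} directly to the integral functionals $T_{\kappa,\eta}$, checking $\EE\sup_{t\in[0,1]}T_{\kappa,\eta}(\theta_t\omega)<\infty$ via Tonelli and the Gaussian bound $\EE\sup_{s\in[r-\mu,r+1]}|\omega(s)|_U\le c(1+\sqrt{|r|})$; your decomposition of $R$ in \eqref{eq100} into $\|\omega_0\|_\mu$, constants and the three integrals with $(\kappa,\eta)\in\{(\lambda,0),(\lambda,1-\nu),(\lambda-C_Fe^{\lambda\mu},0)\}$ is complete, and positivity of the exponents is exactly condition \eqref{mu}. Your route buys two things: it yields the stronger conclusion that $R$ itself (not merely $\log^+R$) is sublinear, and it bypasses the telescoping estimate, which is the most delicate step of the paper's proof. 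The price is one further shrinking of $\Omega$ to a $\theta$-invariant full-measure set on which the three $T_{\kappa,\eta}$ are simultaneously sublinear, whereas the paper's deterministic argument is designed to hold for every $\omega$ in the set already fixed after Lemma \ref{l3}; since the paper itself performs such restrictions, this is harmless, but you should state explicitly that the metric dynamical system is being restricted once more so that the convergence required in the definition of $\dD$ holds for \emph{all} $\omega\in\Omega$ and not merely almost surely.
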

\begin{proof}
We only need to prove that  $R(\omega)$ given in Lemma \ref{absorbing}  is tempered. Let us start to consider

\begin{equation*}\label{eq7}
 t\mapsto \int_{-\infty}^0 e^{(\lambda-C_Fe^{\lambda \mu}) \tau}  \|(\theta_{-t}\omega)_{\tau}\|_\mu d\tau=:R_1(\theta_{-t}\omega)
\end{equation*}
for $\omega\in\Omega$.

First note that, for $\tau\le 0$, we have $ \|(\theta_{-t}\omega)_\tau\|_\mu \le \|\omega_{-t+\tau}\|_\mu+\|\omega_{-t}\|_\mu$.

On the other hand, for $r>0$ let $n^*=n^*(r)$ the smallest integer such that $n^*\mu\geq r$.  Note that then we can express $\|\omega_{-r}\|_\mu \leq \|\omega_{-n^*\mu}\|_\mu+\|\omega_{(-n^*+1)\mu}\|_\mu$. Moreover
\begin{align*}\label{split}
  \|\omega_{-n^*\mu}\|_\mu & \le \sum_{i=1}^{n^*}\sup_{s\in [-\mu,0]}|\omega(-i\mu+s)-\omega((-i+1)\mu+s)|_U+\|\omega_0\|_\mu.
  \end{align*}
  Adding to every term under the previous sum
 $$\omega(-i\mu)-\omega(-i\mu)+\omega((-i+1)\mu)-\omega((-i+1)\mu)$$
 we obtain 
 \begin{align}
  \|\omega_{-n^*\mu}\|_\mu & \le 3 \sum_{i=0}^{n^*}\|\theta_{-i\mu} \omega\|_\mu +\|\omega_0\|_\mu,
  \end{align}
 because in particular
 $$|\omega(-i\mu)-\omega((-i+1)\mu)|_U =|\theta_{(-i+1)\mu}\omega(-\mu)|\leq \|\theta_{(-i+1)\mu} \omega\|_\mu.$$

We could obtain a similar estimate for $\|\omega_{(-n^*+1)\mu}\|_\mu$. Notice that the terms under the sum of (\ref{split}) grow sublinearly with respect to $i$ by Lemma \ref{l3}, and hence $\|\omega_{-r}\|_\mu$ grows subexponentially with respect to $|r|\to\infty$ for $\omega\in\Omega$.

Choose a $0<\kappa<\lambda-C_Fe^{\lambda\mu}$, then for $t\ge 0$
\begin{align*}
   e^{-2\kappa|t|} & \int_{-\infty}^0e^{(\lambda-C_Fe^{\lambda \mu}) \tau}\|(\theta_{-t}\omega)_\tau\|_\mu d\tau \\
  & \le e^{-2\kappa t}\int_{-\infty}^{0}e^{(\lambda-C_Fe^{\lambda \mu})\tau}\|\omega_{-t+\tau}\|_\mu d\tau+\frac{e^{-2\kappa t}}{\lambda-C_Fe^{\lambda \mu}}\|\omega_{-t}\|_\mu\\
  &\le e^{-{\kappa} t} \int_{-\infty}^{0}e^{{\kappa}(-t+\tau)}\|\omega_{-t+\tau}\|_\mu d\tau+\frac{e^{-2\kappa t}}{\lambda-C_Fe^{\lambda \mu}}\|\omega_{-t}\|_\mu \\
  &\le e^{-{\kappa} t} \int_{-\infty}^{-t}e^{{\kappa}\tau}\|\omega_{\tau}\|_\mu d\tau+\frac{e^{-2\kappa t}}{\lambda-C_Fe^{\lambda \mu}}\|\omega_{-t}\|_\mu
  \end{align*}
tends to zero for $t\to\infty$, which easily follows from the above growth properties. Of course, we also have this convergence for $\kappa\ge\lambda-C_Fe^{\lambda\mu}$. We can also prove this convergence for $t\to-\infty$ taking into account that the conditions on $\omega$ ensure that
$\lim_{t\to-\infty}e^{-2\kappa |t|}\sup_{\tau\in[0,-t]}\|\omega_\tau\|_\mu=0$
such that for any $\kappa>0$
\begin{equation*}
  \lim_{t\to\pm\infty}e^{-2\kappa|t|}R_1(\theta_t\omega)=0.
\end{equation*}
We obtain the same convergence for
\begin{equation*}\label{eq71}
 t\mapsto \int_{-\infty}^{0} \frac{e^{\lambda r}}{(-r)^{1-\nu}}  \|(\theta_{-t}\omega)_{\tau}\|_\mu d\tau=:R_2(\theta_{-t}\omega),\quad\text{for }\omega\in\Omega
\end{equation*}

by the fact that $\int_{-\infty}^{0}{e^{\lambda r}}{(-r)^{\nu-1}}  d\tau <\infty$. Hence the set $B$ introduced in Lemma \ref{absorbing} is in $\dD$.
\end{proof}

\begin{remark}\label{r20}
Note that in the previous proof we have obtained that
\begin{equation*}
  \lim_{t\to-\infty}e^{2 \kappa t}\|\omega_t\|_\mu=0
\end{equation*}
for $\kappa>0$, which is deduced from the polynomial growth of the Brownian motion. Moreover, from the proof of the last lemma it follows that the integrals $R_1(\omega),\,R_2(\omega)$ are finite, for any $\omega \in \Omega$.
\end{remark}

\begin{lemma}\label{comp1}
Let $u$ be a solution to \eqref{eq1bis} and suppose that $t> \mu,\, 0<\eps<\nu$, and $\alpha \leq \min\{\nu,\beta\}$.
Then
\begin{equation*}
  \sup_{s\in [-\mu,0]}|(-A)^\eps u_t(s)|\le c(t,\omega\sup_{s\in[-\mu,t]}|u(s)|),\,\|u_t\|_{C^\alpha([-\mu,0];H)}\le c(t,\omega,\sup_{s\in [-\mu,t]}|u(s)|)
\end{equation*}
where $c(t,\omega,x)$ is bounded if the nonnegative numbers $x$ are bounded for any fixed $t>\mu,\omega\in\Omega$.
\end{lemma}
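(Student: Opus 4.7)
Both bounds are obtained by applying the mild representation (\ref{eq?}) termwise, exploiting the smoothing properties of the analytic semigroup $S$ and of $G$, and using the integration-by-parts reformulation which has already removed the stochastic integral. Throughout, $t>\mu$ ensures that for every $s\in[-\mu,0]$ one has $t+s\ge t-\mu>0$, so the initial-data term $S(t+s)\xi(0)$ is strictly inside the analyticity range; and $\sup_{r\in[-\mu,t]}|u(r)|$ controls $\|u_r\|_\mu$ for every $r\in[0,t]$, hence via the Lipschitz bounds (\ref{eq3}), (\ref{G1}), (\ref{G3}) and the Lipschitz bound on $K$ also $|F(u_r)|$, $\|G(u_r)\|_{L(U;D((-A)^\nu))}$ and $\|K(u_r)\|_{L(U;H)}$. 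The number $\sup_{0\le r\le t}|\omega(r)|_U$ is of course finite for fixed $\omega,t$.

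\textbf{Step 1 (spatial $D((-A)^\eps)$-regularity).} For fixed $s\in[-\mu,0]$ apply $(-A)^\eps$ to every summand of $u(t+s)$. The initial-data and $F$-integral terms are handled by $|(-A)^\eps S(\sigma)|_{L(H)}\le\sigma^{-\eps}$ from (\ref{eq5}), the linear-growth bound on $F$, and integrability of $\sigma^{-\eps}$ since $\eps<\nu<1$. For the boundary term write
\begin{equation*}
(-A)^\eps G(u_{t+s})\omega(t+s)=(-A)^{\eps-\nu}(-A)^\nu G(u_{t+s})\omega(t+s),
\end{equation*}
so that $\eps<\nu$ makes $(-A)^{\eps-\nu}$ bounded on $H$ while (\ref{G3}) controls the remaining factor. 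The singular $AG$-integral is rewritten as
\begin{equation*}
\int_0^{t+s}(-A)^{1+\eps-\nu}S(t+s-r)(-A)^\nu G(u_r)\omega(r)\,dr,
\end{equation*}
whose exponent $1+\eps-\nu\in(0,1)$ guarantees integrability of the semigroup factor. The $K$-integral is bounded directly using (\ref{eq5}) and the linear growth of $K$. Summing and taking supremum over $s\in[-\mu,0]$ gives the first estimate with $c(t,\omega,x)$ depending linearly on $x=\sup_{r\in[-\mu,t]}|u(r)|$.

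\textbf{Step 2 (Hölder-in-time regularity).} Take $t-\mu\le t_1<t_2\le t$ and subtract the mild identities for $u(t_1),u(t_2)$. Using the semigroup identity $S(t_2-r)=S(t_2-t_1)S(t_1-r)$ on every integral from $0$ to $t_1$ and keeping the new slices $[t_1,t_2]$ apart, one obtains
\begin{equation*}
u(t_2)-u(t_1)=(S(t_2-t_1)-I)\bigl[u(t_1)-G(u_{t_1})\omega(t_1)\bigr]+\bigl[G(u_{t_2})\omega(t_2)-G(u_{t_1})\omega(t_1)\bigr]+R(t_1,t_2),
\end{equation*}
where $R(t_1,t_2)$ collects the three slice-integrals. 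For the first bracket, Step~1 (applied with $\eps=\alpha$, permissible for $\alpha<\nu$; otherwise with $\eps$ slightly below $\nu$ after absorbing a harmless factor) places $u(t_1)-G(u_{t_1})\omega(t_1)$ in $D((-A)^\alpha)$, whence (\ref{eq5}) gives a factor $(t_2-t_1)^\alpha$. The $G\omega$-difference is split as
\begin{equation*}
[G(u_{t_2})-G(u_{t_1})]\omega(t_2)+G(u_{t_1})[\omega(t_2)-\omega(t_1)];
\end{equation*}
the second summand is bounded by $\|\omega\|_{\beta,0,t}(t_2-t_1)^\beta\le c(t_2-t_1)^\alpha$. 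For the first summand the smoothing hypothesis (\ref{GK}) is crucial: $G(u_{t_2})-G(u_{t_1})=\int_{t_1}^{t_2}K(u_r)\,dr$, so its $L(U;H)$-norm is $\le c(1+\sup|u|)(t_2-t_1)$, producing a Lipschitz (hence $\alpha$-Hölder) contribution. Finally $R(t_1,t_2)$ is estimated termwise: the $F$- and $K$-slices give $O(t_2-t_1)$, while the $AG$-slice is $O((t_2-t_1)^\nu)$ via the factor $(t_2-r)^{\nu-1}$; both are absorbed in $(t_2-t_1)^\alpha$ since $\alpha\le\min\{\nu,\beta\}\le1$. Dividing by $(t_2-t_1)^\alpha$ and adding the sup bound already given by Step~1 yields $\|u_t\|_{C^\alpha([-\mu,0];H)}\le c(t,\omega,\sup_{r\in[-\mu,t]}|u(r)|)$.

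\textbf{Main difficulty.} The only non-routine point is the $G(u_{t})\omega(t)$ boundary term in Step~2: a naïve application of the Lipschitz bound (\ref{G1}) would produce $\|u_{t_2}-u_{t_1}\|_\mu$ and make the estimate circular. The hypothesis (\ref{GK}) that $t\mapsto G(u_t)$ is continuously differentiable with derivative $K(u_t)$ breaks this circularity by converting the $G$-increment into a time integral of the bounded map $K(u_\cdot)$, requiring only pointwise control of $u$ and not its Hölder norm.
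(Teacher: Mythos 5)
Your argument is correct and follows essentially the same route as the paper: both estimates are obtained termwise from the integration-by-parts form of the mild solution, using the analytic-semigroup smoothing for the initial-data, $F$-, $AG$- and $K$-terms, the $D((-A)^\nu)$-valued bound on $G$ from (\ref{G3}) for the singular integral, and --- the one genuinely non-routine point, which you identify correctly --- the hypothesis (\ref{GK}) to convert the increment $G(u_{t_2})-G(u_{t_1})$ into $\int_{t_1}^{t_2}K(u_r)\,dr$ (the paper invokes the mean value theorem for the $C^1$ map $r\mapsto G(u_r)$ to the same effect), thereby avoiding the circular Lipschitz bound. The only organizational difference is in the H\"older step: you factor $u(t_2)=S(t_2-t_1)\bigl[u(t_1)-G(u_{t_1})\omega(t_1)\bigr]+\cdots$ and apply the estimate $|S(h)-{\rm id}|_{L(D((-A)^{\alpha}),H)}\le h^{\alpha}$ from (\ref{eq5}) to a single spatially regular object supplied by your Step 1, whereas the paper subtracts the two mild identities directly and bounds each difference of integrals with the kernel estimate (\ref{eq30}); the two computations are equivalent and yield the same exponents. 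One shared caveat: at the endpoint $\alpha=\nu$ both arguments run into the non-integrable singularity $(t-r)^{-(1+\alpha-\nu)}=(t-r)^{-1}$ in the $AG$-term (correspondingly, your bracket fails to lie in $D((-A)^{\nu})$), so strictly one should take $\alpha<\nu$ there --- which is all that the subsequent compactness argument needs; this is a borderline issue present in the paper's own proof and not a defect of your proposal relative to it.
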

\begin{proof}

We start with the first estimate. We have
\begin{align*}
  &(-A)^{\epsilon}u_{t} (s)=(-A)^{\epsilon}S(t+s)\xi(0)+\int_0^{t+s} (-A)^{\epsilon}S(t+s-r)F(u_r) dr
  \\&+(-A)^{\epsilon}G(u_{t+s})\omega(t+s)+\int_0^{t+s}(-A)^{\epsilon}AS(t+s-r)G(u_r)\omega(r)dr\\
  &-\int_0^{t+s}(-A)^{\epsilon}S(t+s-r)K(u_r)\omega(r)dr \\
  &=I_{1}+\dots+I_{5}.
  \end{align*}
  We here only consider $I_1,\,I_2,\,I_3,\,I_4$ because $I_5$ can be handled in a similar manner to $I_4$. We have
\begin{align*}
 \sup_{-\mu\leq s\leq 0}|I_{1}|&= \sup_{-\mu\leq s\leq 0}|(-A)^{\epsilon}S(t+s)\xi(0)|\\
 &\leq c_{\epsilon} |\xi(0)| \sup_{-\mu\leq s\leq 0} \frac{e^{-\lambda (t+s)}}{(t+s)^{\epsilon}}\leq c_{\epsilon}e^{-\lambda t} \|\xi\|_{\mu}\sup_{-\mu\leq s\leq 0}\frac{e^{-\lambda s}}{(t+s)^{\epsilon}}.
 \end{align*}
We have for $I_2$:
\begin{align*}
 \sup_{-\mu\leq s\leq 0}|I_{2}|&=
 \sup_{-\mu\leq s\leq 0}| \int_0^{t+s} (-A)^{\epsilon}S(t+s-r)F(u_r) dr|\\
 &\leq \sup_{-\mu\leq s\leq 0} \int_0^{t+s}\frac{e^{-\lambda (t+s-r)}}{(t+s-r)^{\epsilon}}
 (\bar C_F+C_F \|u_r\|_\mu) dr\\
&\leq  \sup_{-\mu\leq s\leq 0} \int_{-s}^{t}\frac{e^{-\lambda (t-r)}}{(t-r)^{\epsilon}}
 (\bar C_F+C_F \|u_{r+s}\|_\mu) dr\\
 &\leq\left(\bar C_F+
C_F \sup_{0\leq \tau\leq t}   \|u_{\tau}\|_\mu\right)\int_{0}^{t}\frac{e^{-\lambda (t-r)}}{(t-r)^{\epsilon}}dr.  \end{align*}

In addition, by the continuous embedding $D((-A)^\nu)\subset D((-A)^\eps)$,
 \begin{align*}
 \sup_{-\mu\leq s\leq 0}|I_{3}|&=  \sup_{-\mu\leq s\leq 0}
 |(-A)^{\epsilon}G(u_{t+s})\omega(t+s)|\\&\leq \sup_{-\mu\leq s\leq 0}
 |(-A)^{\nu}G(u_{t+s})\omega(t+s)|\leq C_{G,\nu} \|\omega_{t}\|_{\mu} .
 \end{align*}

Finally, by \eqref{eq5} we obtain
 \begin{align*}
 \sup_{-\mu\leq s\leq 0}|I_{4}|&=  \sup_{-\mu\leq s\leq 0}
| \int_0^{t+s}(-A)^{\epsilon}AS(t+s-r)G(u_r)\omega(r)dr| \\
&\leq C_{G,\nu} \sup_{-\mu\leq s\leq 0}  \int_0^{t+s} \frac{e^{-\lambda (t+s-r) }}{(t+s-r)^{1+\epsilon-\nu}}|\omega(r)|_Udr\\
&\leq  C_{G,\nu} \sup_{-\mu\leq s\leq 0}  \int_{-s}^{t} \frac{e^{-\lambda (t-r) }}{(t-r)^{1+\epsilon-\nu}}
 |\omega(r+s)|_Udr \leq  C_{G,\nu}
\int_{0}^{t} \frac{e^{-\lambda (t-r) }}{(t-r)^{1+\epsilon-\nu}} \|\omega_{r}\|_{\mu}dr.
\end{align*}

Therefore, we have already proven the first statement of this result. In order to prove the second one, consider $s_{1} \leq s_{2}\in [-\mu,0]$. We have that
\begin{align}\label{uholder}
\begin{split}
& u_{t} (s_{1})- u_{t} (s_{2})=(S(t+s_{1})-S(t+s_{2}))\xi(0)\\
&+G(u_{t+s_{1}})\omega(t+s_{1})-
G(u_{t+s_{2}})\omega(t+s_{2})\\
&+\int_0^{t+s_{1}} S(t+s_{1}-r)F(u_r) dr-\int_0^{t+s_{2}} S(t+s_{2}-r)F(u_r) dr \\
&+\int_0^{t+s_{1}}AS(t+s_{1}-r)G(u_r)\omega(r)dr-\int_0^{t+s_{2}}AS(t+s_{2}-r)G(u_r)\omega(r)dr \\
&- \int_0^{t+s_{1}}S(t+s_{1}-r)K(u_r)\omega(r)dr +\int_0^{t+s_{2}}S(t+s_{2}-r)K(u_r)\omega(r)dr.
\end{split}
 \end{align}

On account of \eqref{eq30},
\begin{align*}
|(S(t+s_{1})-S(t+s_{2}))\xi(0)|&\leq
 |s_{1}-s_{2}|^{\alpha}|t+s_{1}|^{-\alpha}|\xi(0)| \leq |s_{1}-s_{2}|^{\alpha}|t-\mu|^{-\alpha}\|\xi\|_{\mu}.
\end{align*}

In addition
\begin{align*}&\bigg|\int_0^{t+s_{1}}AS(t+s_{1}-r)G(u_r)\omega(r)dr-\int_0^{t+s_{2}}AS(t+s_{2}-r)G(u_r)\omega(r)dr\bigg|\\
&\leq  \bigg|\int_{-s_1}^{t}A(S(t-r)-S(t+s_{2}-s_1-r))G(u_{r+s_1})\omega(r+s_1)dr\bigg| \\
&+
\bigg|\int_{t+s_{1}}^{t+s_{2}}  AS(t+s_{2}-r)G(u_{r})\omega(r)dr\bigg|\\
&\leq C_{G,\nu} |s_{1}-s_{2}|^{\alpha}\int_0^{t+s_{1}}\frac{1}{|t+s_{1}-r|^{1+\alpha-\nu}}|\omega(r)|_Udr\\
&+C_{G,\nu}\int_{t+s_{1}}^{t+s_{2}} \frac{1}{|t+s_{2}-r|^{1-\nu}}|\omega(r)|_Udr\\
&\leq  C_{G,\nu}|s_{1}-s_{2}|^{\alpha}
\left(\int_0^{t}\frac{dr}{|t-r|^{1+\alpha-\nu}}+
|s_{1}-s_{2}|^{\nu-\alpha}\right)\|\omega_{t}\|_\mu\le C (G,\nu,\mu)|s_{1}-s_{2}|^{\alpha}.
\end{align*}

The terms stemming from $F$ and $K$ in (\ref{uholder}) can be estimated in a similar manner, taking the linear grow condition of $F$ and the boundedness of $K$  into account.
Now, let us focus on the second term in (\ref{uholder}), which can be expressed in the following way

\begin{align*}
&|G(u_{t+s_{1}})\omega(t+s_{1})-G(u_{t+s_{2}})\omega(t+s_{2})|\\
&\leq |(G(u_{t+s_{1}})-G(u_{t+s_{2}})) \omega(t+s_{1})|
+|G(u_{t+s_{2}})(\omega(t+s_{1})-\omega(t+s_{2}))|=:J_{1}+J_2.
\end{align*}
It is easy to see that
\begin{align*}
J_{2}\leq C_{G}\|\omega\|_{ C^{\beta}([-\mu,t]; U)}
|s_{1}-s_{2}|^{\beta}.
\end{align*}

Furthermore, since the mapping
\begin{equation*}
  t\mapsto G(u_t)\in C^1([0,T];L(U;H)),
\end{equation*}

then,  we can deduce that there exists $\tau\in [t+s_{1},t+s_{2}]$ such that
\begin{align*}
J_{1}&\leq |K(u_{\tau}) \omega(t+s_{1})||s_{1}-s_{2}|\leq  C_{K}|s_{1}-s_{2}| \|\omega_{t}\|_{\mu}.
 \end{align*}

Hence we can estimate $\|u_{t}\|_{C^{\alpha}([-\mu,0]; H)}$.
\end{proof}

\begin{lemma}\label{comp}
Let $B\in \dD$ be the absorbing set from Lemma \ref{absorbing}. Then there exists a compact absorbing set $C\in \dD$.
\end{lemma}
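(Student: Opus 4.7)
The plan is to exploit the smoothing effect of the analytic semigroup, captured in Lemma \ref{comp1}, together with Arzel\`a--Ascoli. Fix some $t_0 > \mu$ so that $t_0 + s > 0$ for every $s\in[-\mu,0]$, and define
$$C(\omega) := \overline{\phi(t_0, \theta_{-t_0}\omega, B(\theta_{-t_0}\omega))}^{\,C_\mu}.$$
I will verify successively that (i) $C(\omega)$ is compact in $C_\mu$, (ii) $C\in\dD$, and (iii) $C$ is $\dD$--absorbing. Together with Theorem \ref{t1-3} this then yields the random attractor in the next step of the paper.

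For compactness, note that the a priori estimate \eqref{apriori}, applied with initial data of norm at most $\rho(\theta_{-t_0}\omega)$ and noise $\theta_{-t_0}\omega$, controls $\sup_{s\in[-\mu,t_0]}|u(s)|$ by a quantity uniform in $\xi\in B(\theta_{-t_0}\omega)$. Consequently, Lemma \ref{comp1} (used with some $0<\eps<\nu$ and $\alpha\le\min\{\nu,\beta\}$) delivers two uniform bounds: one on $\sup_{s\in[-\mu,0]}|(-A)^\eps\phi(t_0,\theta_{-t_0}\omega,\xi)(s)|$, and one on $\|\phi(t_0,\theta_{-t_0}\omega,\xi)\|_{C^\alpha([-\mu,0];H)}$. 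The first yields pointwise precompactness via the compact embedding $D((-A)^\eps)\hookrightarrow H$, the second yields equicontinuity. Arzel\`a--Ascoli then gives relative compactness of $\phi(t_0,\theta_{-t_0}\omega,B(\theta_{-t_0}\omega))$ in $C_\mu$, hence compactness of $C(\omega)$.

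For membership in $\dD$, closedness and nonemptyness are automatic, and measurability of $\omega\mapsto C(\omega)$ follows from the continuity of $\phi(t_0,\omega,\cdot)$ together with the measurability properties of $\phi$ and $B$. The substantive point is temperedness of $\sup_{x\in C(\omega)}\|x\|_\mu$; this is exactly \eqref{apriori} evaluated at $t=t_0$ once $\|\xi\|_\mu$ is replaced by $\rho(\theta_{-t_0}\omega)$ and $\omega$ by $\theta_{-t_0}\omega$. Each resulting summand is either a constant multiple of the tempered $\rho(\theta_{-t_0}\omega)$ (tempered by Lemma \ref{l20}) or an integral against the noise over an interval of length $t_0$ shifted by $-t_0$, and these are tempered by virtue of Lemma \ref{l3} and Remark \ref{r20} together with the subexponential growth of $\|\omega_{-r}\|_\mu$ established in the proof of Lemma \ref{l20}.

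For the absorbing property, I invoke the cocycle identity of Theorem \ref{t2}:
$$\phi(t+t_0,\theta_{-(t+t_0)}\omega,\xi) \;=\; \phi\bigl(t_0,\theta_{-t_0}\omega,\phi(t,\theta_{-(t+t_0)}\omega,\xi)\bigr).$$
Given $D\in\dD$, the absorbing property of $B$ from Lemma \ref{absorbing}, applied at the fiber $\theta_{-t_0}\omega$, produces $t_1=t_D(\theta_{-t_0}\omega)$ such that for every $t\ge t_1$,
$$\phi(t,\theta_{-(t+t_0)}\omega,D(\theta_{-(t+t_0)}\omega))\subset B(\theta_{-t_0}\omega),$$
whence $\phi(t+t_0,\theta_{-(t+t_0)}\omega,D(\theta_{-(t+t_0)}\omega))\subset C(\omega)$ for every such $t$. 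The main obstacle is the bookkeeping in step (ii): one must verify that every random factor appearing in \eqref{apriori} after the shift by $\theta_{-t_0}$ remains tempered. This is technical rather than conceptual, since the class of tempered random variables is stable under the operations (sums, products by deterministic constants, finite integrals) that appear in the estimate.
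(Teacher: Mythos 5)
Your proposal is correct and rests on the same pillars as the paper's proof --- Lemma \ref{comp1} combined with the compact embedding $D((-A)^\eps)\hookrightarrow H$ and Arzel\`a--Ascoli for compactness, and the cocycle property together with the absorption into $B$ for the absorbing property --- but it diverges in two sub-steps. First, you take a \emph{fixed deterministic} time $t_0>\mu$ and control $\sup_{s\in[-\mu,t_0]}|u(s)|$ by running the Gronwall estimate preceding \eqref{apriori} once more on the fiber $\theta_{-t_0}\omega$; the paper instead takes a \emph{random} time $t\ge t_B(\omega)+2\mu$ (and $\ge t_B(\theta_{-\mu}\omega)+\mu$, $\ge t_B(\theta_{-2\mu}\omega)+2\mu$), uses the absorbing property three times to get $\sup_{r\in[(i-1)\mu-2\mu,\,i\mu-2\mu]}|u(r)|\le\rho(\theta_{-(2-i)\mu}\omega)$ for $i=0,1,2$, and applies Lemma \ref{comp1} at time $2\mu$ via the inclusion $\phi(t,\theta_{-t}\omega,B(\theta_{-t}\omega))\subset\phi(2\mu,\theta_{-2\mu}\omega,B(\theta_{-2\mu}\omega))$. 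Second, and this is where the paper's choice pays off: since $t\ge t_B(\omega)$, the paper gets $C(\omega)\subset B(\omega)$ for free, so $C\in\dD$ is inherited from $B$ with no further work, whereas your fixed-$t_0$ set need not sit inside $B(\omega)$ and you must re-verify temperedness of $\sup_{x\in C(\omega)}\|x\|_\mu$ by hand from the Gronwall bound. You correctly flag this as the remaining obstacle and the verification does go through (each term is either $\rho(\theta_{-t_0}\omega)$, tempered by Lemma \ref{l20}, or a finite integral of $\|\omega_\cdot\|_\mu$ over an interval of length $t_0$, tempered by Lemma \ref{l3}), but it is genuine extra bookkeeping that the paper's construction avoids; in exchange your $C(\omega)$ is defined with a deterministic time, which makes its measurability in $\omega$ more transparent. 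Both routes are valid.
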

\begin{proof}
Since $B\in \dD$, from Lemma \ref{absorbing} we know that $B$ absorbs itself. Let $t_B(\omega)$ be the absorbing time for $B$, that is, $\phi(t,\theta_{-t}\omega,B(\theta_{-t}\omega))\subset B(\omega)$ for $t\ge t_B(\omega)$.
Indeed we choose a $t>0$ such that
\begin{equation*}
  t\ge t_B(\omega)+2\mu,\quad t\ge t_B(\theta_{-\mu}\omega)+\mu,\quad t\ge t_{B}(\theta_{-2\mu}\omega)+2\mu.
\end{equation*}
Then, due to the absorbing property,
\begin{align*}
  &\phi(t,\theta_{-t}\omega,B(\theta_{-t}\omega))\subset B(\omega)\\
  &\phi(t-\mu,\theta_{-t}\omega,B(\theta_{-t}\omega))\subset B(\theta_{-\mu}\omega)\\
  &\phi(t-2\mu,\theta_{-t}\omega,B(\theta_{-t}\omega))\subset B(\theta_{-2\mu}\omega).
\end{align*}
Note that by the last inclusion we also get
$$\phi(t,\theta_{-t}\omega,B(\theta_{-t}\omega))\subset \phi(2\mu,\theta_{-2\mu}\omega,B(\theta_{-2\mu}\omega)).$$
Let $u$ be a solution to \eqref{eq1bis} with initial function $\xi\in B(\theta_{-2\mu}\omega)$ and with noise path $\theta_{-2\mu}\omega$.

We can also rewrite the previous expressions as
\begin{equation*}
     \sup_{r\in[\mu,2\mu]}|u(r)|\le \rho(\omega),\,  \sup_{r\in[0,\mu]}|u(r)|\le \rho(\theta_{-\mu}\omega),\, \sup_{r\in[-\mu,0]}|u(r)|\le \rho(\theta_{-2\mu}\omega).
\end{equation*}
As a consequence, on the one hand, according to Lemma \ref{comp1}, if now we consider $c(2\mu,\theta_{-2\mu}\omega,\max_{i\in\{0,1,2\}}\rho(\theta_{-i\mu}\omega))$, we can apply Arzela-Ascoli's theorem to obtain the compactness of $\overline{\phi(2\mu,\theta_{-2\mu}\omega,B(\theta_{-2\mu}\omega))}^{C_\mu}$. Therefore we can consider 
$$C(\omega)=\overline{\phi(t,\theta_{-t}\omega,B(\theta_{-t}\omega))}^{C_\mu}$$ which is also compact. On the other hand, $C(\omega)\subset B(\omega)$ such that $C\in\dD$. 

Note that we can apply Arzela-Ascoli's theorem, because the H\"older continuity estimate obtained in Lemma \ref{comp1} implies the equicontinuity, and the boundeness of $u_t(s)$ in the norm of $D((-A)^\epsilon)$ implies the boundedness of the norm of $u_t(s)$ in $H$, since $D((-A)^\epsilon)$ is compactly embedded into $H$, see \cite{MA}, Chapter 7.

Furthermore $C$ is pullback absorbing because of
\begin{equation*}
  \phi(s,\theta_{-s-t}\omega,D(\theta_{-s-t}\omega))\subset B(\theta_{-t}\omega)
\end{equation*}
for $s\ge t_D(\theta_{-t}\omega)$ and $D\in\dD$.
\end{proof}
Finally, as a direct application of Theorem \ref{t1-3}, we have
\begin{theorem}
The random dynamical system generated by \eqref{eq1bis} has a random attractor.
\end{theorem}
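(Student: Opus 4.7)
The plan is to recognize that this final theorem is a direct application of the general existence criterion stated in Theorem \ref{t1-3}, so my job reduces to verifying that the two hypotheses of that criterion are in place. The cocycle $\phi$ produced in Theorem \ref{t2} is continuous in the initial datum $\xi\in C_\mu$, which supplies the first ingredient. The second ingredient is a compact pullback $\dD$-absorbing set, and this is exactly what Lemma \ref{comp} provides: the set $C(\omega)=\overline{\phi(t_\omega,\theta_{-t_\omega}\omega,B(\theta_{-t_\omega}\omega))}^{C_\mu}$, built from the tempered absorbing ball $B$ of Lemma \ref{absorbing} (which is shown to be in $\dD$ by Lemma \ref{l20}) and compactified via the smoothing estimate of Lemma \ref{comp1}.

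First I would record that, under the standing hypotheses on $F$, $G$, $K$ together with the smallness condition \eqref{mu} on the delay $\mu$, Lemma \ref{absorbing} yields a pullback $\dD$-absorbing ball $B(\omega)\subset C_\mu$ centered at zero with radius $\rho(\omega)=R(\omega)+\delta$, and Lemma \ref{l20} places this ball inside $\dD$. Then I would invoke Lemma \ref{comp} to produce a compact set $C(\omega)\in\dD$ with $C(\omega)\subset B(\omega)$ which pullback-absorbs every $D\in\dD$; the compactness here rests on the $D((-A)^\varepsilon)$-regularity and $\alpha$-Hölder regularity of $u_t$ provided by Lemma \ref{comp1}, combined with the compact embedding of $D((-A)^\varepsilon)$ into $H$ and the Arzelà--Ascoli theorem.

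Finally, since Theorem \ref{t2} gives that $\phi:\RR^+\times\Omega\times C_\mu\to C_\mu$ is a continuous random dynamical system and the compact $\dD$-absorbing set $C$ has been constructed, Theorem \ref{t1-3} applies verbatim and delivers the random attractor
\begin{equation*}
A(\omega)=\bigcap_{s\geq 0}\overline{\bigcup_{t\geq s}\phi(t,\theta_{-t}\omega,C(\theta_{-t}\omega))}^{\,C_\mu},
\end{equation*}
which is unique in $\dD$.

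The work has really already been done in the preceding lemmas, so no serious obstacle remains at this final step; the only point that deserves a brief sanity check is that the compactification step in Lemma \ref{comp} requires the initial time to be strictly larger than $\mu$ (since Lemma \ref{comp1} asks for $t>\mu$), but this is harmless because one simply picks the absorbing time $t_\omega$ large enough to simultaneously exceed $2\mu$ and $t_B(\omega)$, as done in the proof of Lemma \ref{comp}.
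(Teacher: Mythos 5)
Your proposal matches the paper's own argument exactly: the paper proves this theorem as a direct application of Theorem \ref{t1-3}, using the continuity of the cocycle from Theorem \ref{t2} and the compact tempered absorbing set constructed in Lemmas \ref{absorbing}, \ref{l20}, \ref{comp1} and \ref{comp}. Your additional sanity check on choosing the absorbing time larger than $2\mu$ is consistent with what is already done in the proof of Lemma \ref{comp}, so nothing is missing.
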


\end{document}